\def\eqref#1{equation~\ref{#1}}
\def\1{\bm{1}}
\DeclareMathAlphabet{\mathsfit}{\encodingdefault}{\sfdefault}{m}{sl}
\SetMathAlphabet{\mathsfit}{bold}{\encodingdefault}{\sfdefault}{bx}{n}
\newtheorem{definition}{\bf Definition}
\newtheorem{lemma}{\bf Lemma}
\newtheorem{theorem}{\bf Theorem}
\newtheorem{example}{\bf Example}
\newtheorem{assumption}{\bf Assumption}
\newenvironment{proof}{\emph{Proof.}}{\hfill $\square$\par}
\title{Decentralized Riemannian Conjugate Gradient Method on the Stiefel Manifold}
\author{Jun Chen$^1$, Haishan Ye$^{2,3}$, Mengmeng Wang$^1$, Tianxin Huang$^1$ \\
\textbf{Guang Dai$^3$, Ivor W.~Tsang$^{4,5}$, Yong Liu$^1$}\thanks{Corresponding author} \\
${}^1$Zhejiang University \; ${}^2$Xi'an Jiaotong University \; ${}^3$SGIT AI Lab, State Grid Corporation of China \\
${}^4$CFAR and IHPC, Agency for Science, Technology and Research \; ${}^5$SCSE, NTU \\
\texttt{\{junc,mengmengwang,21725129\}@zju.edu.cn}, \;\;\texttt{yehaishan@xjtu.edu.cn} \\
\texttt{\{guang.gdai,ivor.tsang\}@gmail.com}, \;\;\texttt{yongliu@iipc.zju.edu.cn}
}
\begin{document}

\maketitle

\begin{abstract}
The conjugate gradient method is a crucial first-order optimization method that generally converges faster than the steepest descent method, and its computational cost is much lower than that of second-order methods.
However, while various types of conjugate gradient methods have been studied in Euclidean spaces and on Riemannian manifolds, there is little study for those in distributed scenarios.
This paper proposes a decentralized Riemannian conjugate gradient descent (DRCGD) method that aims at minimizing a global function over the Stiefel manifold. The optimization problem is distributed among a network of agents, where each agent is associated with a local function, and the communication between agents occurs over an undirected connected graph. Since the Stiefel manifold is a non-convex set, a global function is represented as a finite sum of possibly non-convex (but smooth) local functions. 
The proposed method is free from expensive Riemannian geometric operations such as retractions, exponential maps, and vector transports, thereby reducing the computational complexity required by each agent. To the best of our knowledge, DRCGD is the first decentralized Riemannian conjugate gradient algorithm to achieve global convergence over the Stiefel manifold. 
%The numerical experiments reveal that the advantages of our DRCGD.
\end{abstract}

\section{Introduction}

In large-scale systems such as machine learning, control, and signal processing, data is often stored in a distributed manner across multiple nodes and it is also difficult for a single (centralized) server to meet the growing computing needs. Therefore, the decentralized optimization has gained significant attention in recent years because it can effectively address the above two potential challenges. In this paper, we consider the following distributed smooth optimization problem over the Stiefel manifold: 

\begin{equation}
\begin{aligned}
&\min \frac{1}{n} \sum_{i=1}^n f_i\left(x_i\right), \\
&\text { s.t. }  x_1=\cdots=x_n, \quad x_i \in \mathcal{M}, \quad \forall i=1,2, \ldots, n,
\end{aligned}
\label{decentralized}
\end{equation}
where $n$ is the number of agents, $f_i$ is the local function at each agent, and $\mathcal{M}:=\operatorname{St}(d,r)=\{x \in \mathbb{R}^{d \times r}|x^\top x = \emph{I}_r\}$ is the Stiefel manifold ($r\leq d$)~\citep{zhu2017riemannian,sato2022riemannian}. Many important large-scale tasks can be written as the optimization problem~(\ref{decentralized}), e.g., the principle component analysis~\citep{ye2021deepca}, eigenvalue estimation~\citep{chen2021decentralized}, dictionary learning~\citep{raja2015cloud}, and deep neural networks with orthogonal constraint~\citep{vorontsov2017orthogonality,huang2018orthogonal,eryilmaz2022understanding}.

The decentralized optimization has recently attracted increasing attention in Euclidean spaces. Among the methods explored, the decentralized (sub)-gradient method stands out as a straightforward way combining local gradient descent and consensus error reduction~\citep{nedic2009distributed,yuan2016convergence}. Further, in order to converge to a stationary point (i.e., exact convergence) with fixed step size, various algorithms have considered the local historical information, e.g., gradient tracking algorithm~\citep{qu2017harnessing,yuan2018exact}, primal-dual framework~\citep{alghunaim2020decentralized}, EXTRA~\citep{shi2015extra}, and ADMM~\citep{shi2014linear,aybat2017distributed}, when each local function is convex.

However, none of the above studies can solve the problem~(\ref{decentralized}) since the Stiefel manifold is a non-convex set. Based on the viewpoint of~\citet{chen2021decentralized}, the Stiefel manifold is an embedded sub-manifold in Euclidean space. Thus, with the help of Riemannian optimization (i.e., optimization on Riemannian manifolds)~\citep{absil2008optimization,boumal2019global,sato2021riemannian}, the problem~(\ref{decentralized}) can be thought as a constrained problem in Euclidean space. 
The Riemannian optimization nature brings more challenges for consensus construction design. For instance, a straightforward way is to take the average $\frac{1}{n} \sum_{i=1}^n x_i$ in Euclidean space. However, the arithmetic average does not apply to the Riemannian manifold because the arithmetic average of points can be outside of the manifold. To address this problem, Riemannian consensus method has been developed~\citep{shah2017distributed}, but it needs to use an asymptotically infinite number of consensus steps for convergence.
Subsequently, \citet{wang2022decentralized} combined the gradient tracking algorithm with an augmented Lagrangian function to achieve the single step of consensus.
Recently, \citet{chen2021decentralized} proposed a decentralized Riemannian gradient descent algorithm over the Stiefel manifold, which also requires only the finite step of consensus to achieve the convergence rate of $\mathcal{O}(1/\sqrt{K})$. Simultaneously, the corresponding gradient tracking version was presented to reach a stationary point with the convergence rate of $\mathcal{O}(1/K)$. 
On this basis, \citet{deng2023decentralized} replaced retractions with projection operators, thus establishing a decentralized projected Riemannian gradient descent algorithm over the compact submanifold to achieve the convergence rate of $\mathcal{O}(1/\sqrt{K})$. Similarly, the corresponding gradient tracking version also achieved the convergence rate of $\mathcal{O}(1/K)$.

In this paper, we address the decentralized conjugate gradient method on Riemannian manifolds, which we refer to as the decentralized Riemannian conjugate gradient descent (DRCGD) method. In essence, the conjugate gradient method is an important first-order optimization method, which generally converges faster than the steepest descent method, and its computational cost is much lower than that of second-order methods. As well, conjugate gradient methods are highly attractive for solving large-scale optimization problems~\citep{sato2022riemannian}.
Recently, Riemannian conjugate gradient methods have been studied, however, expensive operations such as parallel translations, vector transports, exponential maps, and retractions are required. For instance, some studies use a theoretical approach, i.e., parallel translation along the geodesics~\citep{smith1995optimization,edelman1996conjugate,edelman1998geometry}, which hinders the practical applicability. More generally, other studies utilize a vector transport~\citep{ring2012optimization,sato2015new,zhu2017riemannian,sakai2020hybrid,sakai2021sufficient} or inverse retraction~\citep{zhu2020riemannian} to simplify the execution of each iteration of Riemannian conjugate gradient methods. Nonetheless, there is still room for computational improvements.

This paper focuses on designing an efficient Riemannian conjugate gradient algorithm to solve the problem (\ref{decentralized}) over any undirected connected graph. Our contributions are summarized as follows: 
\begin{enumerate}
    \item We propose a novel decentralized Riemannian conjugate gradient descent (DRCGD) method whose global convergence is established under an extended assumption. It is the first Riemannian conjugate gradient algorithm for distributed optimization.
    \item We further develop the projection operator for search directions such that the expensive retraction and vector transport are completely replaced. Therefore, the proposed method is retraction-free and vector transport-free, and achieves the consensus of search directions, giving rise to an appealing algorithm with low computational cost.
    \item Numerical experiments are implemented to demonstrate the effectiveness of the theoretical results. The experimental results are used to compare the performance of state-of-the-art ones on eigenvalue problems.
\end{enumerate}

\section{Preliminaries}

\subsection{Notation}

The undirected connected graph $G=(\mathcal{V},\mathcal{E})$, where $\mathcal{V}=\{1,2,\cdots,n\}$ is the set of agents and $\mathcal{E}$ is the set of edges. When $W$ is the adjacency matrix of $G$, we have $W_{ij}=W_{ji}$ and $W_{ij}>0$ if an edge $(i,j) \in \mathcal{E}$ and otherwise $W_{ij}=0$. We use $\mathbf{x}$ to denote the collection of all local variables $x_i$ by stacking them, i.e., $\mathbf{x}^\top:=(x_1^\top, x_2^\top, \cdots, x_n^\top)$. Then define $f(\mathbf{x}):=\frac{1}{n} \sum_{i=1}^n f_i(x_i)$. We denote the $n$-fold Cartesian product of $\mathcal{M}$ with itself as $\mathcal{M}^n=\mathcal{M} \times \cdots \times\mathcal{M}$, and use $[n]:=\{1,2,\cdots,n\}$. For any $x \in \mathcal{M}$, we denote the tangent space and normal space of $\mathcal{M}$ at $x$ as $T_x \mathcal{M}$ and $N_x \mathcal{M}$, respectively. We mark $\Vert \cdot \Vert$ as the Euclidean norm. The Euclidean gradient of $f$ is $\nabla f(x)$ and the Riemannian gradient of $f$ is $\operatorname{grad} f(x)$. Let $\emph{I}_d$ and $\textbf{1}_n \in \mathbb{R}^n$ be the $d \times d$ identity matrix and a vector of all entries one, respectively. Let $\mathbf{W}^t:=W^t \otimes \emph{I}_d$, where $t$ is a positive integer and $\otimes$ denotes the Kronecker product.

\subsection{Riemannian manifold}

We define the distance of a point $x \in \mathbb{R}^{d\times r}$ onto $\mathcal{M}$ by
\[
\operatorname{dist}(x,\mathcal{M}):=\inf_{y \in \mathcal{M}} \Vert y-x\Vert,
\]
then, for any radius $R>0$, the $R$-tube around $\mathcal{M}$ can be defined as the set:
\[
U_{\mathcal{M}}(R):=\{x: \operatorname{dist}(x,\mathcal{M}) \leq R\}.
\]
Furthermore, we define the nearest-point projection of a point $x \in \mathbb{R}^{d\times r}$ onto $\mathcal{M}$ by
\[
\mathcal{P}_{\mathcal{M}}(x):= \arg \min_{y \in \mathcal{M}} \Vert y-x \Vert.
\]
Based on Definition~\ref{smooth}, it should be noted that a closed set $\mathcal{M}$ is $R$-proximally smooth if the projection $\mathcal{P}_{\mathcal{M}}(x)$ is a singleton whenever $\operatorname{dist}(x,\mathcal{M}) < R$. In particular, when $\mathcal{M}$ is the Stiefel manifold, it is a 1-proximally smooth set~\citep{balashov2021gradient}. And these properties will be crucial for us to demonstrate the convergence.

\begin{definition}
\citet{clarke1995proximal} An \textbf{$R$-proximally smooth} set $\mathcal{M}$ satisfies that for any real $\gamma \in (0,R)$, the estimate holds:
\begin{equation}
\left\|\mathcal{P}_{\mathcal{M}}(x)-\mathcal{P}_{\mathcal{M}}(y)\right\| \leq \frac{R}{R-\gamma}\|x-y\|, \quad \forall x, y \in U_{\mathcal{M}}(\gamma).
\end{equation}
% (ii) for any point $x \in \mathcal{M}$ and a normal $v \in N_x \mathcal{M}$, the following inequality holds for all $y \in \mathcal{M}$:
% \begin{equation}
% \langle v, y-x\rangle \leq \frac{\|v\|}{2 R}\|y-x\|^2.
% \end{equation}
\label{smooth}
\end{definition}
To proceed the optimization on Riemannian manifolds, we introduce a key concept called the retraction operator in Definition~\ref{retraction}. Obviously, the exponential maps~\citep{absil2008optimization} also satisfies this definition, so that the retraction operator is not unique.
\begin{definition}
\citet{absil2008optimization} A smooth map $\mathcal{R}: T \mathcal{M}  \rightarrow \mathcal{M}$ is called a \textbf{retraction} on a smooth manifold $\mathcal{M}$ if the retraction of $\mathcal{R}$ to the tangent space $T_x \mathcal{M}$ at any point $x \in \mathcal{M}$, denoted by $\mathcal{R}_x$, satisfies the following conditions: \\
(i) $\mathcal{R}$ is continuously differentiable. \\
(ii) $\mathcal{R}_x(0_x)=x$, where $0_x$ is the zero element of $T_x \mathcal{M}$. \\
(iii) $\mathrm{D} \mathcal{R}_x(0_x)=\operatorname{id}_{T_x \mathcal{M}}$, the identity mapping on $T_x \mathcal{M}$.
\label{retraction}
\end{definition}

Furthermore, we can introduce a well-known concept called a vector transport, which as a special case of parallel translation can be explicitly formulated on the Stiefel manifold. Compared to parallel translation, a vector transport is easier and cheaper to compute~\citep{sato2021riemannian}. Using the Whitney sum $T \mathcal{M} \oplus T \mathcal{M} := \{(\eta, \xi) | \eta, \xi \in T_x \mathcal{M}, x \in \mathcal{M}\}$, we can define a vector transport as follows.

\begin{definition}
\citet{absil2008optimization} A map $\mathscr{T}: T \mathcal{M} \oplus T \mathcal{M} \rightarrow T \mathcal{M}:(\eta, \xi) \mapsto \mathscr{T}_\eta(\xi)$ is called a \textbf{vector transport} on $\mathcal{M}$ if there exists a retraction $\mathcal{R}$ on $\mathcal{M}$ and $\mathscr{T}$ satisfies the following conditions for any $x \in \mathcal{M}$: \\
(i) $\mathscr{T}_\eta(\xi) \in T_{\mathcal{R}_x(\eta)} \mathcal{M}, \quad \eta, \xi \in T_x \mathcal{M}$. \\
(ii) $\mathscr{T}_{0_x}(\xi)=\xi, \quad \xi \in T_x \mathcal{M}$. \\
(iii) $\mathscr{T}_\eta(a \xi+b \zeta)=a \mathscr{T}_\eta(\xi)+b \mathscr{T}_\eta(\zeta), \quad a, b \in \mathbb{R}, \quad \eta, \xi, \zeta \in T_x \mathcal{M}$ .
\label{transport}
\end{definition}

\begin{example}
\citet{absil2008optimization} On a Riemannian manifold $\mathcal{M}$ with a retraction $\mathcal{R}$, we can construct a vector transport $\mathscr{T}^R: T \mathcal{M} \oplus T \mathcal{M} \rightarrow T \mathcal{M}:(\eta, \xi) \mapsto \mathscr{T}_\eta^R(\xi)$ defined by
\[
\mathscr{T}_\eta^R(\xi):=\mathrm{D} \mathcal{R}_x(\eta)[\xi], \quad \eta, \xi \in T_x \mathcal{M}, \quad x \in \mathcal{M},
\]
called the differentiated retraction.
\end{example}

\section{Consensus problem on Stiefel manifold}

Let $x_1, \cdots, x_n \in  \mathcal{M}$ be the local variables of each agent, we denote the Euclidean average point of $x_1, \cdots, x_n$ by
\begin{equation}
    \hat{x}:=\frac{1}{n} \sum_{i=1}^n x_i .
\end{equation}
In Euclidean space, one can use $\sum_{i=1}^n \Vert x_i - \hat{x}\Vert^2$ to measure the consensus error. Instead, on the Stiefel manifold $\operatorname{St}(d,r)$, we use the induced arithmetic mean~\citep{sarlette2009consensus}, defined as follows:
\begin{equation}
    \bar{x}:= \arg \min_{y \in \operatorname{St}(d,r)} \sum_{i=1}^n \Vert y - x_i \Vert^2= \mathcal{P}_{\operatorname{St}}(\hat{x}),
\end{equation}
where $\mathcal{P}_{\operatorname{St}}(\cdot)$ is the orthogonal projection onto $\operatorname{St}(d,r)$. Considering the Riemannian optimization, the Riemannian gradient of $f_i(x)$ on $\operatorname{St}(d,r)$, endowed with the induced Riemannian metric from the Euclidean inner product $\langle \cdot,\cdot \rangle$, is given by
\begin{equation}
\operatorname{grad} f_i(x)=\mathcal{P}_{T_x \mathcal{M}}(\nabla f_i(x)),
\end{equation}
where $\mathcal{P}_{T_x \mathcal{M}}(\cdot)$ is the orthogonal projection onto $T_x \mathcal{M}$. More specifically~\citep{edelman1998geometry,absil2008optimization}, for any $y \in \mathbb{R}^{d \times r}$, we have
\begin{equation}
\mathcal{P}_{T_x \mathcal{M}}(y) = y - \frac{1}{2} x (x^\top y + y^\top x).
\label{projection}
\end{equation}
Subsequently, the $\epsilon$-stationary point of problem~(\ref{decentralized}) is given by Definition~\ref{stationary}.
\begin{definition}
\citet{chen2021decentralized} The set of points $\mathbf{x}^\top=(x_1^\top x_2^\top \cdots x_n^\top)$ is called an \textbf{$\epsilon$-stationary} point of problem~(\ref{decentralized}) if the following holds:
\begin{equation}
\frac{1}{n} \sum_{i=1}^n\left\|x_i-\bar{x}\right\|^2 \leq \epsilon \quad \text { and } \quad\|\operatorname{grad} f(\bar{x})\|^2 \leq \epsilon ,
\end{equation}
where $f(\bar{x})=\frac{1}{n} \sum_{i=1}^n f_i(\bar{x})$.
\label{stationary}
\end{definition}

To achieve the stationary point given in Definition~\ref{stationary}, the consensus problem over $\operatorname{St}(d,r)$ needs to be considered to minimize the following quadratic loss function
\begin{equation}
\begin{aligned}
&\min \varphi^t(\mathbf{x}):=\frac{1}{4} \sum_{i=1}^n \sum_{j=1}^n W_{i j}^t\left\|x_i-x_j\right\|^2, \\
&\text { s.t. }  x_i \in \mathcal{M}, \forall i \in[n] \text {, } \\
\end{aligned}
\label{consensus}
\end{equation}
where the positive integer $t$ is used to indicate the $t$-th power of the doubly stochastic matrix $W$. Note that $W_{ij}^t$ is computed through performing $t$ steps of communication on the tangent space, and satisfies the following assumption.
\begin{assumption}
We assume that the undirected graph $G$ is connected and $W$ is doubly stochastic, i.e., (i)
$W = W^\top$; (ii) $W_{ij} \geq 0$ and $0 < W_{ii} < 1$ for all $i, j$; (iii) Eigenvalues of $W$ lie in $(-1, 1]$. The second largest
singular value $\sigma_2$ of $W$ lies in $[0, 1)$.
\label{weight}
\end{assumption}

Throughout the paper, we assume that the local function $f_i(x)$ is Lipschitz smooth, which is a standard assumption in theoretical analysis of the optimization problem~\citep{jorge2006numerical,zeng2018nonconvex,deng2023decentralized}.
\begin{assumption}
Each local function $f_i(x)$ has $L$-Lipschitz continuous gradient
\begin{equation}
\Vert \nabla f_i(x) - \nabla f_i(y) \Vert \leq L \Vert x - y \Vert , \quad i \in [n],
\end{equation}
and let $L_f:= \max_{x \in \operatorname{St}(d,r)}\Vert \nabla f_i(x) \Vert$. Therefore, $\nabla f(x)$ is also $L$-Lipschitz continuous in the Euclidean space and $L_f \geq \max_{x \in \operatorname{St}(d,r)}\Vert \nabla f(x) \Vert$.
\label{lipschitz}
\end{assumption}

With the properties of projection operators, we can derive the similar Lipschitz inequality on the Stiefel manifold as the Euclidean-type one~\citep{nesterov2013introductory} in the following lemma.

\begin{lemma}
(Lipschitz-type inequality) Under Assumption~\ref{lipschitz}, for any $x,y \in \operatorname{St}(d,r)$, if $f(x)$ is $L$-Lipschitz smooth in the Euclidean space, then there exists a constant $L_g=L+2L_f$ such that

\begin{equation}
    \Vert \operatorname{grad} f_i(x) - \operatorname{grad} f_i(y) \Vert \leq L_g \Vert x-y \Vert , \quad i \in [n].
\end{equation}

\label{lem1}
\end{lemma}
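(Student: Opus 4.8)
The plan is to work directly from the closed-form expression (\ref{projection}) for the tangent-space projection, combined with the triangle inequality, by decomposing the difference of the Riemannian gradients into a \emph{change-of-Euclidean-gradient} part and a \emph{change-of-projector} part. Writing $g_x:=\nabla f_i(x)$ and $g_y:=\nabla f_i(y)$ and inserting the intermediate quantity $\mathcal{P}_{T_x\mathcal{M}}(g_y)$, I would start from
\[
\|\operatorname{grad} f_i(x)-\operatorname{grad} f_i(y)\|
\;\le\; \underbrace{\|\mathcal{P}_{T_x\mathcal{M}}(g_x)-\mathcal{P}_{T_x\mathcal{M}}(g_y)\|}_{(\mathrm{I})}
\;+\;\underbrace{\|\mathcal{P}_{T_x\mathcal{M}}(g_y)-\mathcal{P}_{T_y\mathcal{M}}(g_y)\|}_{(\mathrm{II})}.
\]
For $(\mathrm{I})$, since $\mathcal{P}_{T_x\mathcal{M}}$ is an orthogonal, hence nonexpansive, linear projection, we get $(\mathrm{I})\le\|g_x-g_y\|\le L\|x-y\|$ directly from Assumption~\ref{lipschitz}.

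For $(\mathrm{II})$ I would substitute (\ref{projection}) for both projectors; the bare $g_y$ terms cancel and one is left with
\[
(\mathrm{II})=\tfrac12\bigl\|\,(xx^\top-yy^\top)g_y + x g_y^\top x - y g_y^\top y\,\bigr\|.
\]
The three ingredients that finish this are: (a) every $x\in\operatorname{St}(d,r)$ has operator norm $1$ because $x^\top x=I_r$ (so $\|xx^\top y\|\le\|y\|$ etc.); (b) $\|g_y\|=\|\nabla f_i(y)\|\le L_f$ by Assumption~\ref{lipschitz}; and (c) the telescoping identities $xx^\top-yy^\top=(x-y)x^\top+y(x-y)^\top$ and $x g_y^\top x - y g_y^\top y=(x-y)g_y^\top x + y g_y^\top(x-y)$, each of which displays the factor $x-y$ explicitly. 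Using $\|AB\|\le\min\{\|A\|_{\mathrm{op}}\|B\|,\|A\|\|B\|_{\mathrm{op}}\}$ together with (a)–(b), each of the two summands in $(\mathrm{II})$ is bounded by $2L_f\|x-y\|$, whence $(\mathrm{II})\le 2L_f\|x-y\|$. Adding the two estimates gives $\|\operatorname{grad} f_i(x)-\operatorname{grad} f_i(y)\|\le(L+2L_f)\|x-y\|=L_g\|x-y\|$, and the identical computation with $\nabla f$ in place of $\nabla f_i$ transfers the claim to the aggregate function.

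The argument is essentially mechanical, so I do not expect a genuine obstacle; the only point demanding care is the bookkeeping between the operator norm and the Frobenius norm $\|\cdot\|$ appearing in the statement — one must peel off the factors in the right order (using $\|x\|_{\mathrm{op}}=1$ rather than $\|x\|=\sqrt r$) so that no spurious dimension-dependent constants $\sqrt r$ creep into $L_g$.
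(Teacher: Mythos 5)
Your proposal is correct and follows essentially the same route as the paper's proof: the same two-term decomposition via the intermediate quantity $\mathcal{P}_{T_x\mathcal{M}}(\nabla f_i(y))$, nonexpansiveness of the tangent projection for the first term, and the explicit formula (\ref{projection}) with a telescoping factorization to extract $x-y$ and obtain the $2L_f\|x-y\|$ bound for the second. If anything, your bookkeeping of operator versus Frobenius norms (using $\|x\|_{\mathrm{op}}=1$ rather than $\|x\|=\sqrt{r}$) is slightly more careful than the paper's own estimate.
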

\begin{proof}
The proofs can be found in Appendix~\ref{app1}.
\end{proof}

Furthermore, since the Stiefel manifold is a 1-proximally smooth set~\citep{balashov2021gradient}, the projection operator on $\operatorname{St}(d,r)$ has the following property based on Definition~\ref{smooth}
\begin{equation}
\left\|\mathcal{P}_{\mathcal{M}}(x)-\mathcal{P}_{\mathcal{M}}(y)\right\| \leq \frac{1}{1-\gamma}\|x-y\|, \quad \forall x, y \in U_{\mathcal{M}}(\gamma), \gamma \in (0,1).
\label{property}
\end{equation}
% \begin{equation}
% \langle v, y-x\rangle \leq \frac{\|v\|}{2}\|y-x\|^2.
% \end{equation}
This inequality will be used to characterize the local convergence of the consensus problem.

\section{Decentralized Riemannian conjugate gradient method}

In this section, we will present a decentralized Riemannian conjugate gradient descent (DRCGD) method for solving the problem~(\ref{decentralized}) described in Algorithm~\ref{alg1} and yield the convergence analysis.

\subsection{The algorithm}

We now introduce conjugate gradient methods on a Riemannian manifold $\mathcal{M}$. Our goal is to develop the decentralized version of Riemannian conjugate gradient methods on $\operatorname{St}(d,r)$. The generalized Riemannian conjugate gradient descent~\citep{absil2008optimization,sato2021riemannian} iterates as
\begin{equation}
    x_{k+1} = \mathcal{R}_{x_k}(\alpha_k \eta_k),
\end{equation}
where $\eta_k$ is the search direction on the tangent space $T_{x_k} \mathcal{M}$ and $\alpha_k > 0$ is the step size. Then an operation called retraction $\mathcal{R}_{x_k}$ is performed to ensure feasibility, whose definition is given in Definition~\ref{retraction}. It follows from Definition~\ref{transport} that we have $\mathscr{T}_{\alpha_k \eta_k}(\eta_k) \in T_{x_{k+1}} \mathcal{M}$. Thus, the search direction~\citep{sato2021riemannian} can be iterated as
\begin{equation}
\eta_{k+1} = - \operatorname{grad} f(x_{k+1}) + \beta_{k+1} \mathscr{T}_{\alpha_k \eta_k}(\eta_k), \quad k=0,1,\cdots,
\end{equation}
where the scalar $\beta_{k+1} \in \mathbb{R}$. Since $\operatorname{grad} f(x_{k+1}) \in T_{x_{k+1}} \mathcal{M}$ and $\beta_{k+1} \eta_k \in T_{x_k} \mathcal{M}$, they belong to different tangent spaces and cannot be added. Hence, the vector transport in Definition~\ref{transport} needs to be used to map a tangent vector in $T_{x_k} \mathcal{M}$ to one in $T_{x_{k+1}} \mathcal{M}$.

However, vector transports are still computationally expensive, which significantly affects the efficiency of our algorithm. To extend search directions in the decentralized scenario together with computationally cheap needs, we perform the following update of decentralized search directions:
\begin{equation}
\eta_{i,k+1}=-\operatorname{grad} f_i(x_{i,k+1}) + \beta_{i,k+1} \mathcal{P}_{T_{x_{i,k+1}}\mathcal{M}}\left(\sum_{j=1}^n W_{i j}^t \eta_{j,k}\right) , \quad i \in [n],
\label{direction}
\end{equation}
where $\operatorname{grad} f_i(x_{i,k+1}) \in T_{x_{i,k+1}}\mathcal{M}$ and $\eta_{i,k} \in T_{x_{i,k}}\mathcal{M}$.
Note that $\sum_{j=1}^n W_{i j}^t \eta_{j,k}$ is clearly not on the tangent space $T_{x_{i,k+1}}\mathcal{M}$ and even not on the tangent space $T_{x_{i,k}}\mathcal{M}$. Therefore, it is important to define the projection $\mathcal{P}_{T_{x_{i,k+1}}\mathcal{M}}$ of $\sum_{j=1}^n W_{i j}^t \eta_{j,k}$ to $T_{x_{i,k+1}}\mathcal{M}$ so that we can compute the addition in the same tangent space $T_{x_{i,k+1}}\mathcal{M}$ to update the $\eta_{i,k+1}$.
Simultaneously, $\sum_{j=1}^n W_{i j}^t \eta_{j,k}$ also achieves the consensus of search directions.
On the other hand, similar to the decentralized projected Riemannian gradient descent~\citep{deng2023decentralized}, the DRCGD performs the following update in the $k$-th iteration
\begin{equation}
x_{i,k+1}=\mathcal{P}_{\mathcal{M}}\left(\sum_{j=1}^n W_{i j}^t x_{j,k}+\alpha_k\eta_{i,k}\right), \quad i \in [n].
\end{equation}
The Riemanian gradient step with a unit step size, i.e., $\mathcal{P}_{\mathcal{M}}\left(\sum_{j=1}^n W_{i j}^t x_{j,k}\right)$, is utilized in the above iteration for the consensus problem~(\ref{consensus}). So far, we have presented the efficient method by replacing both retractions and vector transports with projection operators.

Regarding $\beta_{k+1}$, there are six standard types in the Euclidean space, which were proposed by \citet{fletcher1964function}, \citet{dai1999nonlinear}, \citet{fletcher2000practical}, \citet{polak1969note} and \citet{polyak1969conjugate}, \citet{hestenes1952methods}, and \citet{liu1991efficient}, respectively. Furthermore, the Riemannian version of $\beta_{k+1}$ was given in~\citep{sato2022riemannian}. \citet{ring2012optimization} analyzed the Riemannian conjugate gradient with a specific scalar $\beta_{k+1}^{\mathrm{R}-\mathrm{FR}}$, which is a natural generalization of $\beta_{k+1}^{\mathrm{FR}}$ in \citep{fletcher1964function}. In this paper, we yield a naive extension of $\beta_{k+1}^{\mathrm{R}-\mathrm{FR}}$ in terms of the decentralized type
\begin{equation}
\beta_{i,k+1}^{\mathrm{R}-\mathrm{FR}}=\frac{\left\langle\operatorname{grad} f_i(x_{i,k+1}), \operatorname{grad} f_i(x_{i,k+1})\right\rangle_{x_{i,k+1}}}{\left\langle\operatorname{grad} f_i(x_{i,k}), \operatorname{grad} f_i(x_{i,k})\right\rangle_{x_{i,k}}}=\frac{\left\|\operatorname{grad} f_i(x_{i,k+1})\right\|_{x_{i,k+1}}^2}{\left\|\operatorname{grad} f_i(x_{i,k})\right\|_{x_{i,k}}^2},
\label{beta}
\end{equation}
where the ``$\mathrm{R}-$" stands for ``Riemannian" and ``$\mathrm{FR}$" stands for ``Fletcher-Reeves" type~\citep{fletcher1964function}.
With the above preparations, we present the DRCGD method described in Algorithm~\ref{alg1}. The step 3 first performs a consensus step and then updates the local variable using search directions $\eta_{i,k}$. The step 4 uses the decentralized version of $\beta_{k+1}^{\mathrm{R}-\mathrm{FR}}$. The step 5 is to project the search direction onto the tangent space $T_{x_{i,k+1}}\mathcal{M}$, which follows a projection update.
\begin{algorithm}[htbp]
	\caption{Decentralized Riemannian Conjugate Gradient Descent (DRCGD) for solving Eq.(\ref{decentralized}).}
	\label{alg1}
	\begin{algorithmic}[1]
		\Require
		Initial point $\mathbf{x}_0 \in \mathcal{M}^n$, an integer $t$, set $\eta_{i,0}=-\operatorname{grad} f_i(x_{i,0})$.
		\For{$k=0,\cdots$} \Comment{for each node $i \in [n]$, in parallel}
		\State Choose diminishing step size $\alpha_k=\mathcal{O}(1/\sqrt{k})$
            \State Update $x_{i,k+1}=\mathcal{P}_{\mathcal{M}}\left(\sum_{j=1}^n W_{i j}^t x_{j,k}+\alpha_k\eta_{i,k}\right)$
            \State Compute $\beta_{i,k+1}=\Vert \operatorname{grad} f_i(x_{i,k+1}) \Vert^2 / \Vert \operatorname{grad} f_i(x_{i,k}) \Vert^2$
            \State Update $\eta_{i,k+1}=-\operatorname{grad} f_i(x_{i,k+1}) + \beta_{i,k+1} \mathcal{P}_{T_{x_{i,k+1}}\mathcal{M}}\left(\sum_{j=1}^n W_{i j}^t \eta_{j,k}\right)$
		\EndFor
	\end{algorithmic}
\end{algorithm}

To analyze the convergence of the proposed algorithm, the following assumptions on the step size $\alpha_k$ are also needed~\citep{sato2022riemannian}.
\begin{assumption}
The step size $\alpha_k > 0$ satisfies the following conditions: \\
(i) $\alpha_k$ is decreasing and bounded
\begin{equation}
   \lim_{k \rightarrow \infty} \alpha_k=0, \quad \lim_{k \rightarrow \infty} \frac{\alpha_{k+1}}{\alpha_k}=1, \quad 0 < \alpha_k \leq \frac{\gamma(1-\gamma)}{4C}. 
\end{equation}
(ii) For constant $c_1$ and $c_2$ with $0<c_1<c_2<1$, the Armijo condition on $\mathcal{M}$ is
\begin{equation}
    f_i(x_{i,k+1}) \leq f_i(x_{i,k}) + c_1 \alpha_k \langle \operatorname{grad} f_i(x_{i,k}), \eta_{i,k}\rangle_{x_{i,k}} .
    \label{armijo}
\end{equation}
(iii) The strong Wolfe condition is 
\begin{equation}
\left|\left\langle\operatorname{grad} f_i\left(x_{i,k+1}\right), \mathscr{T}_{\alpha_k \eta_{i,k}}^R(\eta_{i,k}) \right\rangle_{x_{i,k+1}}\right| \leq c_2\left|\left\langle\operatorname{grad} f_i\left(x_{i,k}\right), \eta_{i,k}\right\rangle_{x_{i,k}}\right| .
\label{wolfe}
\end{equation}
\end{assumption}

\subsection{Convergence analysis}

This subsection focuses on the global convergence analysis of our DRCGD algorithm.
%, where Theorem~\ref{thm1}, Theorem~\ref{thm2}, and Theorem~\ref{thm3} are very crucial, which give the locally linear convergence of consensus error, the convergence of each agent, and the global convergence, respectively.
Different from the bounded assumption of a vector transport in~\citep{sato2022riemannian}, we give an extended assumption about the projection operator in the decentralized scenario, where $g_{i,k+1}:=\operatorname{grad}f_i(x_{i,k+1})$. Specifically, we assume, for each $k\geq0$, that the following inequality holds
\begin{equation}
\left|\left\langle g_{i,k+1}, \mathcal{P}_{T_{x_{i,k+1}}\mathcal{M}}\left(\sum_{j=1}^n W_{i j}^t \eta_{j,k}\right)\right\rangle_{x_{i,k+1}}\right| \leq\left|\left\langle g_{i,k+1}, \mathscr{T}_{\alpha_k \eta_{i,k}}^R\left(\eta_{i,k}\right)\right\rangle_{x_{i,k+1}}\right|,
\label{assumption1}
\end{equation}
which will be used as a substitute to proceed the following demonstration.
Next, we consider proving the convergence of the Fletcher-Reeves-type DRCGD method for each agent, i.e., we use $\beta_{i,k+1}^{\mathrm{R}-\mathrm{FR}}$ in Eq.(\ref{beta}). See \citet{al1985descent} for its Euclidean version and \citet{sato2022riemannian} for its Riemannian version.

At last, we establish the global convergence based on Theorem~\ref{thm1} and Theorem~\ref{thm2}, which give the locally linear convergence of consensus error and the convergence of each agent, respectively. 
\begin{theorem}
(Global convergence). Let $\{\mathbf{x}_k\}$ be the sequence generated by Algorithm~\ref{alg1}. Suppose that Assumptions~\ref{weight} and \ref{lipschitz} hold. If $\mathbf{x}_0 \in \mathcal{N}:=\{\mathbf{x}: \Vert \hat{x}-\bar{x} \Vert \leq \gamma/2\}$ and $\Vert \beta_{i,k}\Vert \leq C$ (a constant $C>0$), then

\begin{equation}
\lim_{k \rightarrow \infty} \inf \Vert \operatorname{grad} f(\bar{x}_{k}) \Vert^2 = 0.
\end{equation}

\label{thm3}
\end{theorem}
\begin{proof}
The proofs can be found in Appendix~\ref{app8}.
\end{proof}

\section{Numerical experiment}

In this section, we compare our DRCGD method with DRDGD~\citep{chen2021decentralized} and DPRGD~\citep{deng2023decentralized}, which are first-order decentralized Riemannian optimization methods using retraction and projection respectively, on the following decentralized eigenvector problem:
\begin{equation}
\min _{\mathbf{x} \in \mathcal{M}^n}-\frac{1}{2 n} \sum_{i=1}^n \operatorname{tr}\left(x_i^{\top} A_i^{\top} A_i x_i\right), \quad \text { s.t. } \quad x_1=\ldots=x_n ,
\label{pca}
\end{equation}
where $\mathcal{M}^n:=\underbrace{\operatorname{St}(d, r) \times \cdots \times \operatorname{St}(d, r)}_n$, $A_i \in \mathbb{R}^{m_i \times d}$ is the local data matrix for agent $i$ and $m_i$ is the sample size. Note that $A^\top:=[A_1^\top, A_2^\top, \cdots, A_n^\top]$ is the global data matrix. For any solution $x^*$ of Eq.(\ref{pca}), giving  an orthogonal matrix $Q \in \mathbb{R}^{r \times r}$, $x^* Q$ is also a solution in essence. Then the distance between two points $x$ and $x^*$ can be defined as
\[
d_s(x,x^*)=\min_{Q^\top Q=QQ^\top=\emph{I}_r} \Vert x Q - x^*\Vert.
\]

We employ fixed step sizes for all comparisons, i.e., the step size is set to $\alpha_k=\frac{\hat{\alpha}}{\sqrt{K}}$ with $K$ being the maximal number of iterations. We examine various graph matrices used to represent the topology across agents, i.e., the Erdos-Renyi (ER) network with probability $p$ and the Ring network. It follows from \citep{chen2021decentralized} that $W$ is the Metroplis constant matrix~\citep{shi2015extra}. 

We measure algorithms by four metrics, i.e., the consensus error $ \Vert \mathbf{x}_k - \mathbf{\bar{x}}_k \Vert$, the gradient norm $\Vert \mathrm{grad} f( {\bar{x}}_k) \Vert$, the objective function $f( {\bar{x}}_k) - f^*$, and the distance to the global optimum $d_s({\bar{x}}_k, x^*)$, respectively. The experiments are evaluated with the Intel(R) Core(TM) i7-12700 CPU. And the codes are implemented in Python with mpi4py.

\subsection{Synthetic data}

We fix $m_1=m_2=\cdots=m_n=1000$, $d=10$, and $r=5$. Then we generate $m_1 \times n$ independent and identically distributed samples to obtain $A$ by following standard multi-variate Gaussian distribution. Specifically, let $A= U \Sigma V^\top$ be the truncated SVD, where $U \in \mathbb{R}^{1000n \times d}$ and $V \in \mathbb{R}^{d \times d}$ are orthogonal matrices, and $\Sigma \in \mathbb{R}^{d \times d}$ is a diagonal matrix. Then we set the singular values of $A$ to be $\Sigma_{i,i}=\Sigma_{0,0} \times \Delta^{i/2}$ where $i \in [d]$ and eigengap $\Delta \in (0,1)$. We also fix the maximum iteration epoch to 200 and early terminate it if $d_s(\bar{x}_k,x^*) \leq 10^{-5}$.

The comparison results are shown in Figures \ref{fig1}, \ref{fig2}, and \ref{fig3}. It can be seen from Figure~\ref{fig1} that our DRCGD converges faster than DPRGD under different numbers of agents ($n=16$ and $n=32$). When $n$ becomes larger, these two algorithms both converge slower. In Figure~\ref{fig2}, DRDGD gives very similar performance under different numbers of consensus steps, i.e., $t \in \{1,10,\infty\}$, which means that the numbers of consensus steps do not affect the performance of DRDGD much. A similar phenomenon can be observed in DPRGD. In contrast, as the communication rounds $t$ increase, our DRCGD consistently achieves better performance. Note that one can achieve the case of $t \rightarrow \infty$ through a complete graph with the equally weighted matrix. For Figure~\ref{fig3}, we see DPRGD has very close trajectories under different graphs on the four metrics. In fact, this also occurs for DRDGD. However, the connected graph ER helps our DRCGD obtain a better final solution than the connected graph Ring because ER network with the probability of each edge is a better graph connection than Ring network. Moreover, our DRCGD with ER $p=0.6$ performs better than that with ER $p=0.3$. In conclusion, DRCGD always converges faster and performs better than both DRDGD and DPRGD under different network graphs because the search direction of conjugate gradient method we designed in Eq.(\ref{direction}) is not only vector transport-free, but also achieves the consensus.

\begin{figure*}[h]
	\centering
	\begin{minipage}{0.329\linewidth}
		\centering
		\includegraphics[width=1\linewidth]{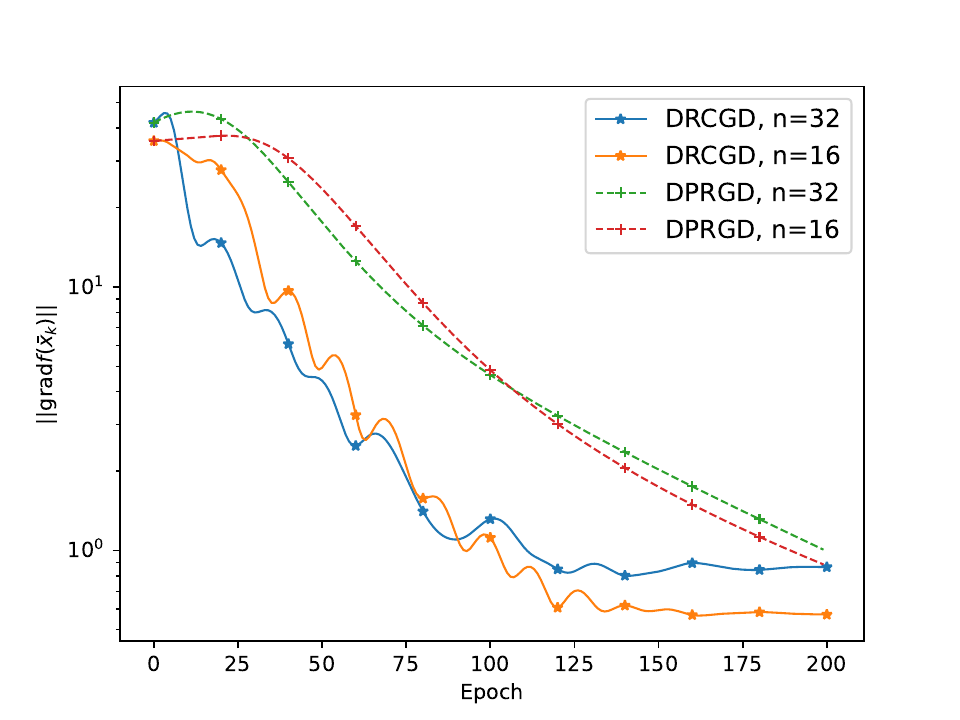}
	\end{minipage}
	\centering
	\begin{minipage}{0.329\linewidth}
		\centering
		\includegraphics[width=1\linewidth]{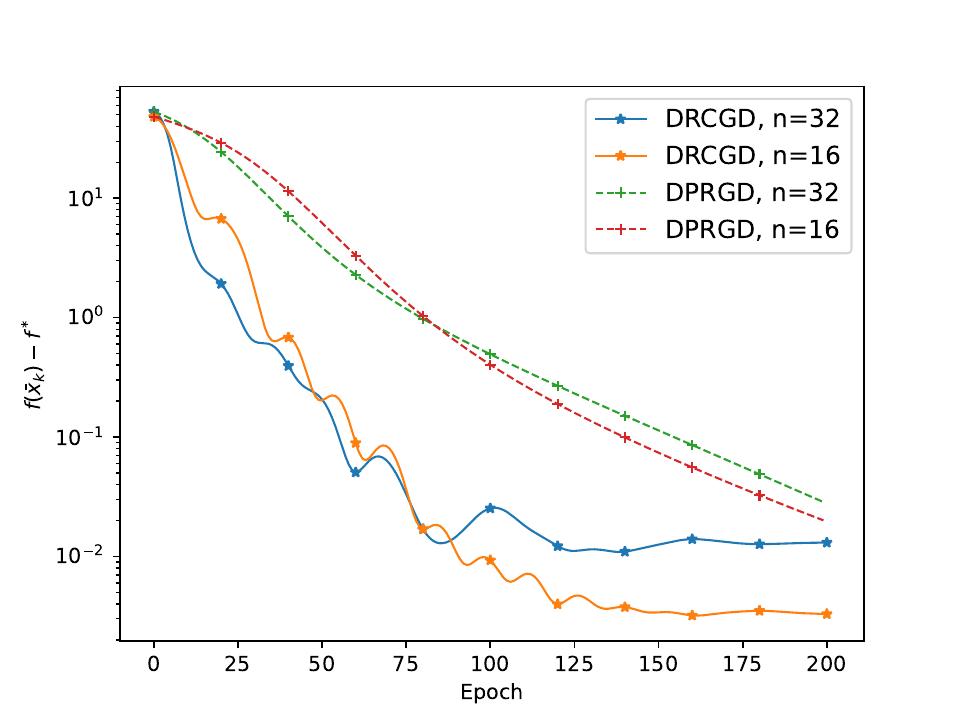}
	\end{minipage}
	\centering
	\begin{minipage}{0.329\linewidth}
		\centering
		\includegraphics[width=1\linewidth]{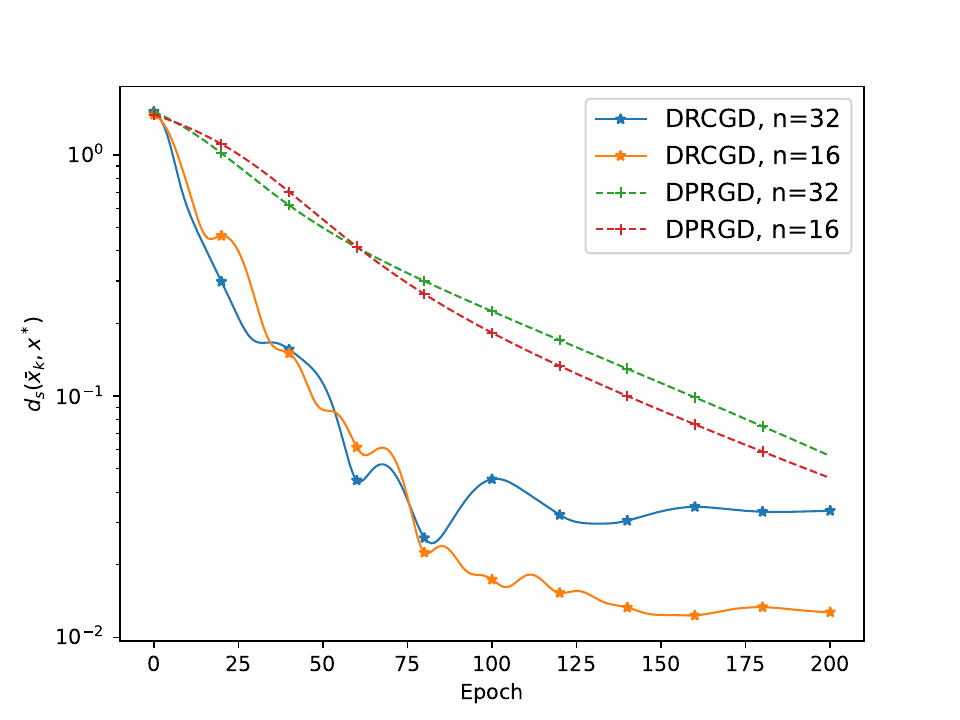}
	\end{minipage}
	\caption{Numerical results on synthetic data with different numbers of agents, eigengap $\Delta = 0.8$, Graph: Ring, $t=1$, $\hat{\alpha}=0.01$. y-axis: log-scale.}
	\label{fig1}
\end{figure*}

\begin{figure*}[t]
	\centering
	\begin{minipage}{0.49\linewidth}
		\centering
		\includegraphics[width=1\linewidth]{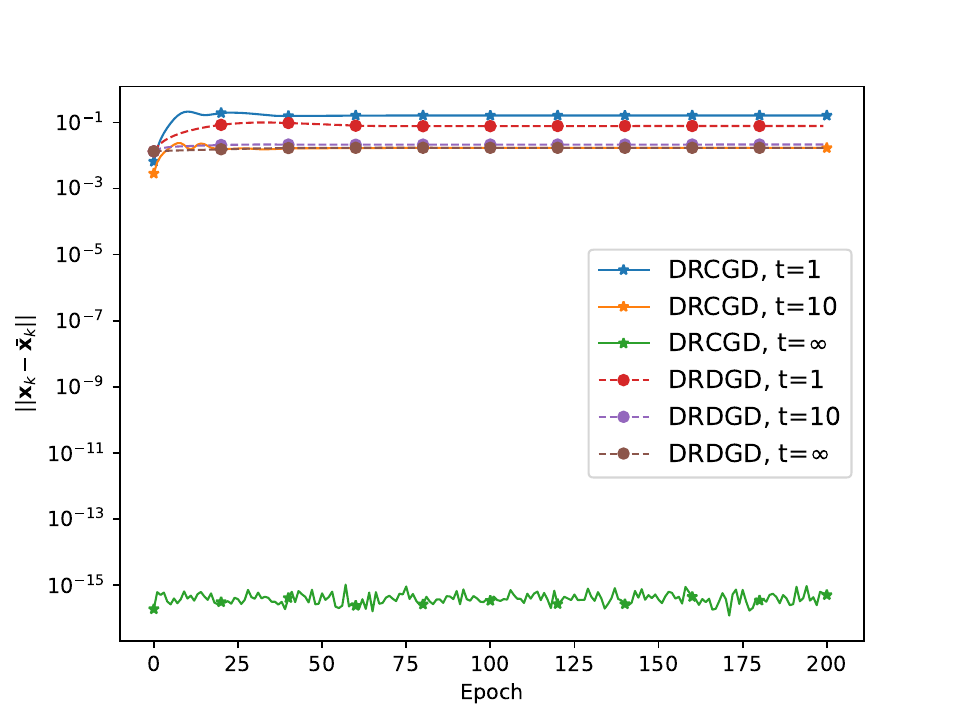}
	\end{minipage}
	\centering
	\begin{minipage}{0.49\linewidth}
		\centering
		\includegraphics[width=1\linewidth]{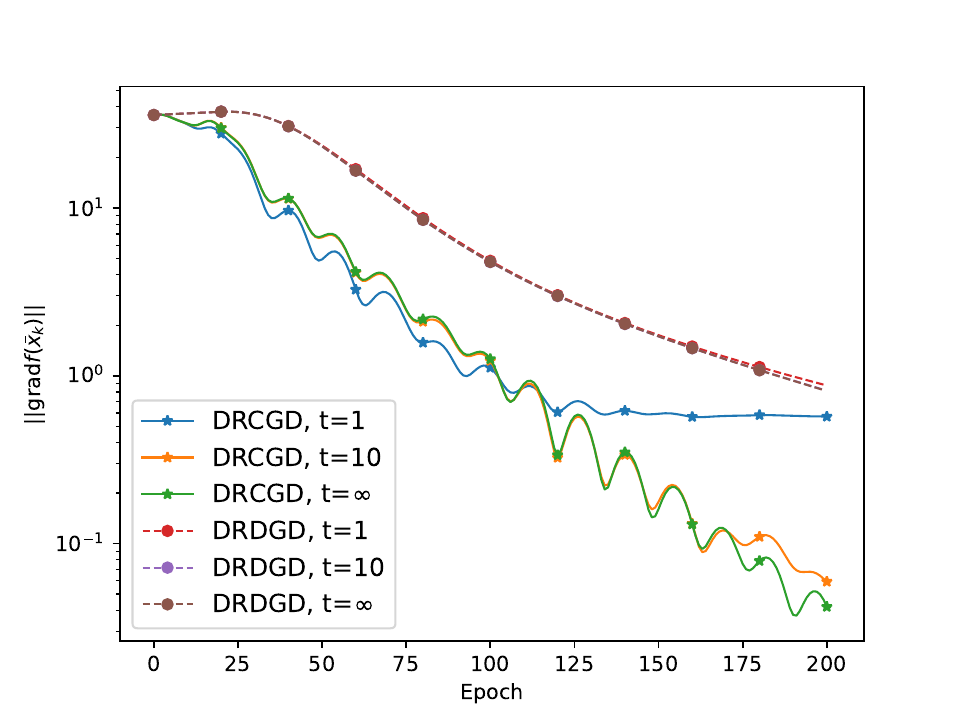}
	\end{minipage}
	\centering
	\begin{minipage}{0.49\linewidth}
		\centering
		\includegraphics[width=1\linewidth]{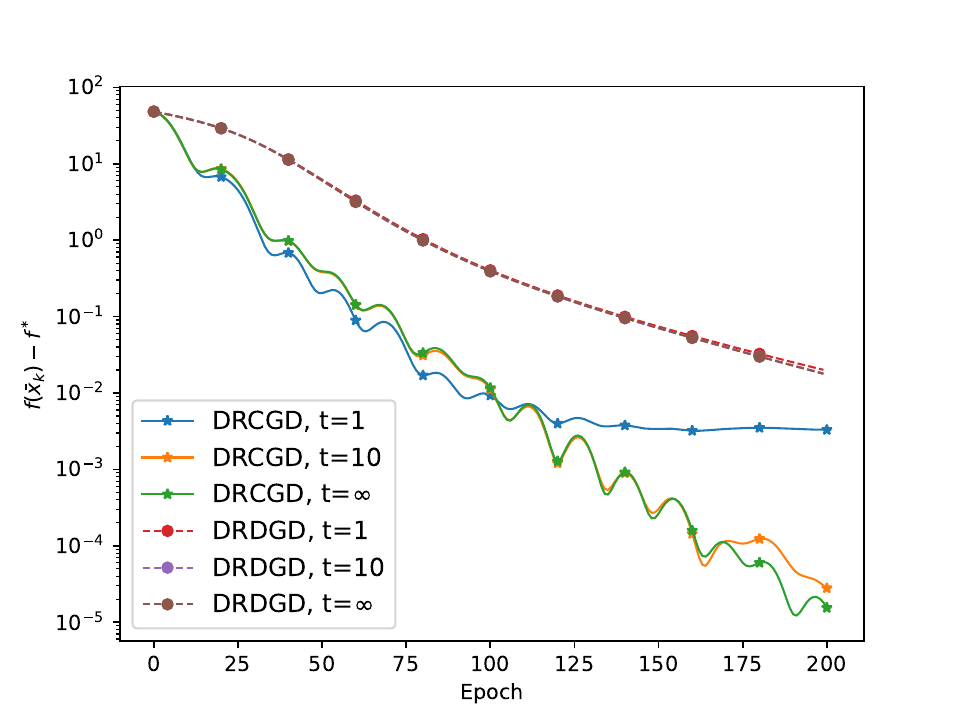}
	\end{minipage}
 	\centering
	\begin{minipage}{0.49\linewidth}
		\centering
		\includegraphics[width=1\linewidth]{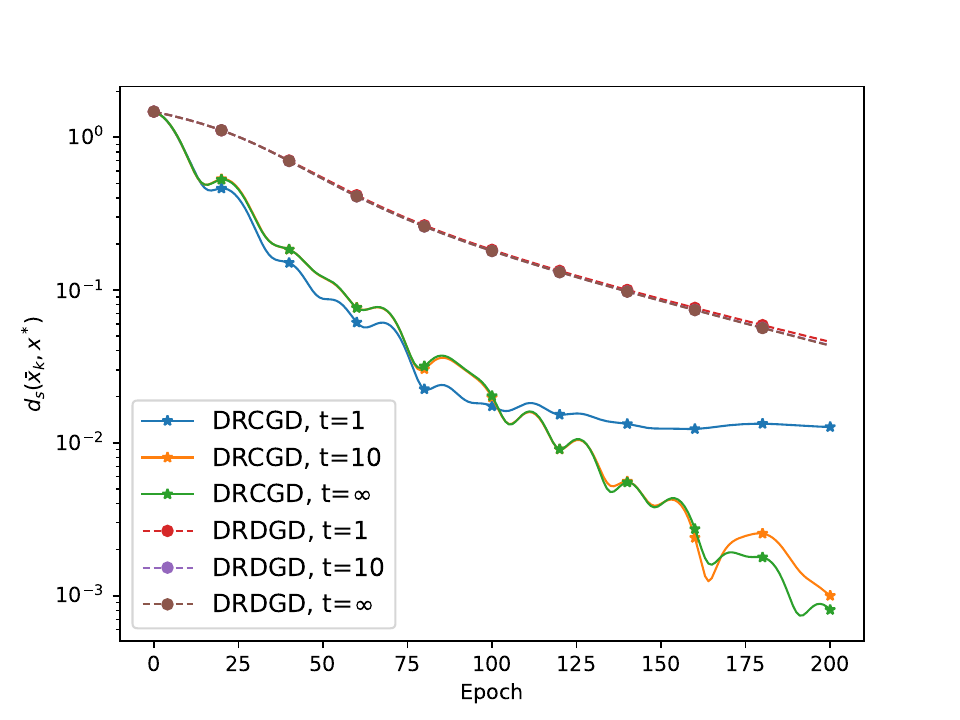}
	\end{minipage}
	\caption{Numerical results on synthetic data with different numbers of consensus steps, eigengap $\Delta = 0.8$, Graph: Ring, $n=16$, $\hat{\alpha}=0.01$. y-axis: log-scale.}
	\label{fig2}
\end{figure*}

\begin{figure*}[t]
	\centering
	\begin{minipage}{0.49\linewidth}
		\centering
		\includegraphics[width=1\linewidth]{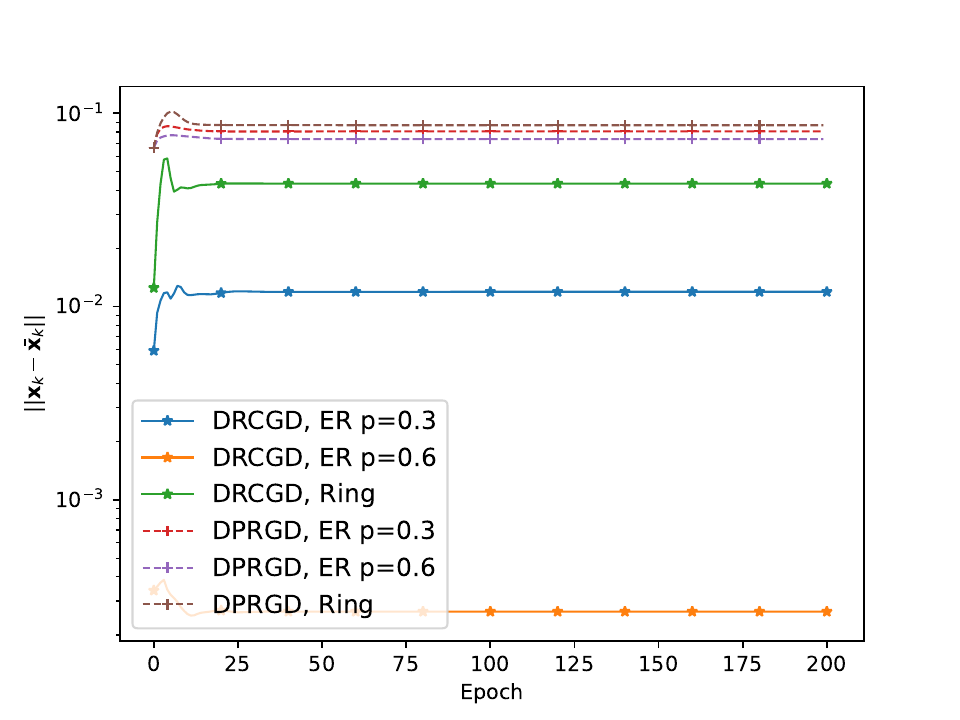}
	\end{minipage}
	\centering
	\begin{minipage}{0.49\linewidth}
		\centering
		\includegraphics[width=1\linewidth]{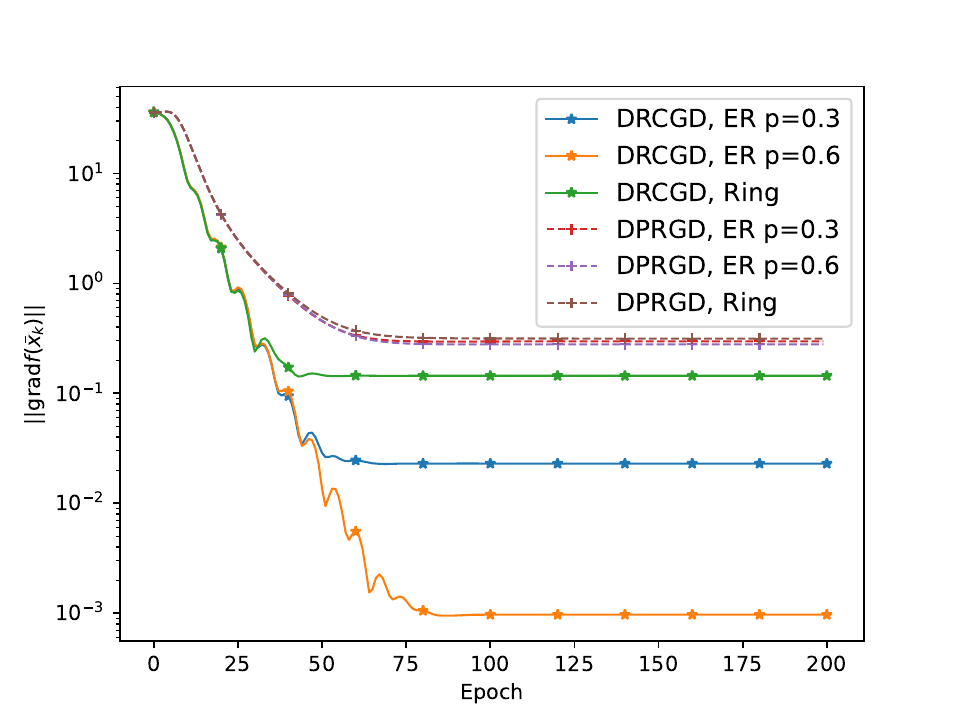}
	\end{minipage}
	\centering
	\begin{minipage}{0.49\linewidth}
		\centering
		\includegraphics[width=1\linewidth]{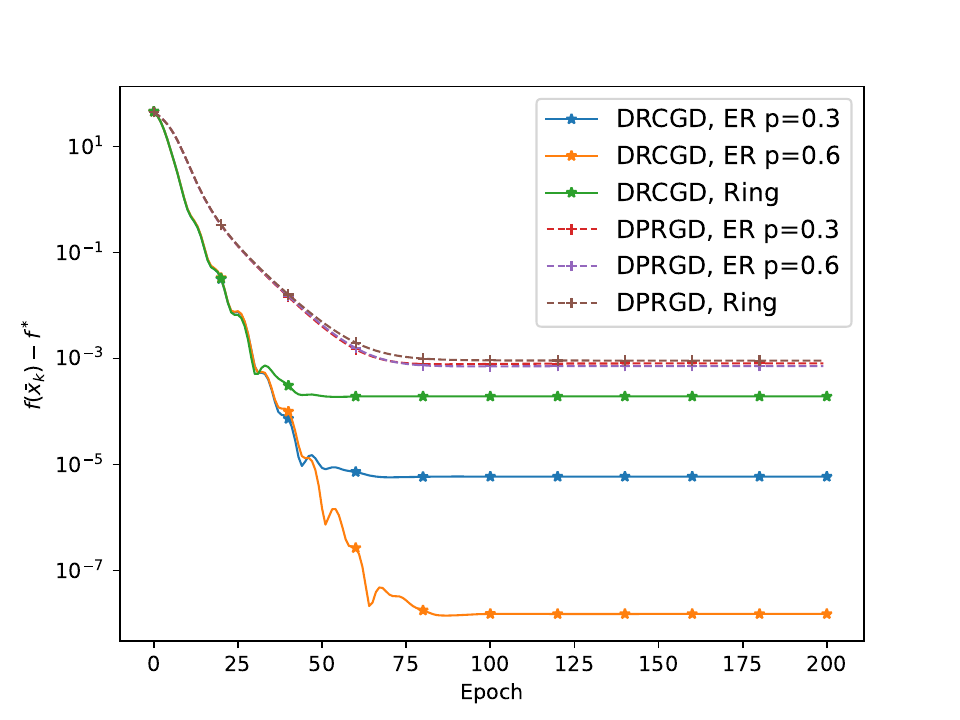}
	\end{minipage}
 	\centering
	\begin{minipage}{0.49\linewidth}
		\centering
		\includegraphics[width=1\linewidth]{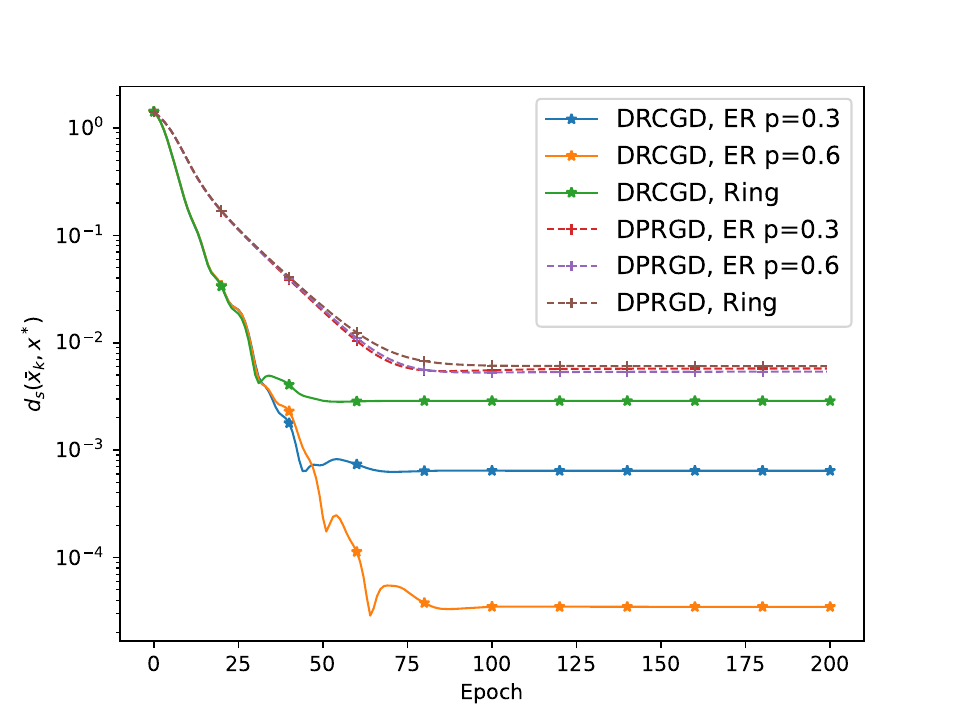}
	\end{minipage}
	\caption{Numerical results on synthetic data with different network graphs, eigengap $\Delta = 0.8$, $t=10$, $n=16$, $\hat{\alpha}=0.05$. y-axis: log-scale.}
	\label{fig3}
\end{figure*}

\begin{figure*}[t]
	\centering
	\begin{minipage}{0.49\linewidth}
		\centering
		\includegraphics[width=1\linewidth]{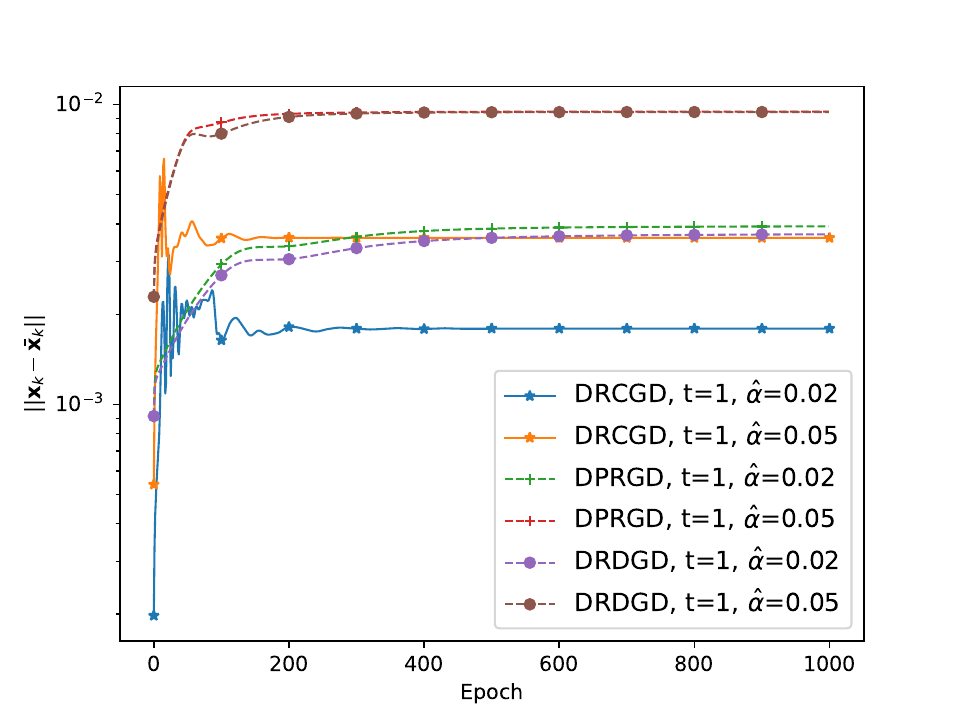}
	\end{minipage}
	\centering
	\begin{minipage}{0.49\linewidth}
		\centering
		\includegraphics[width=1\linewidth]{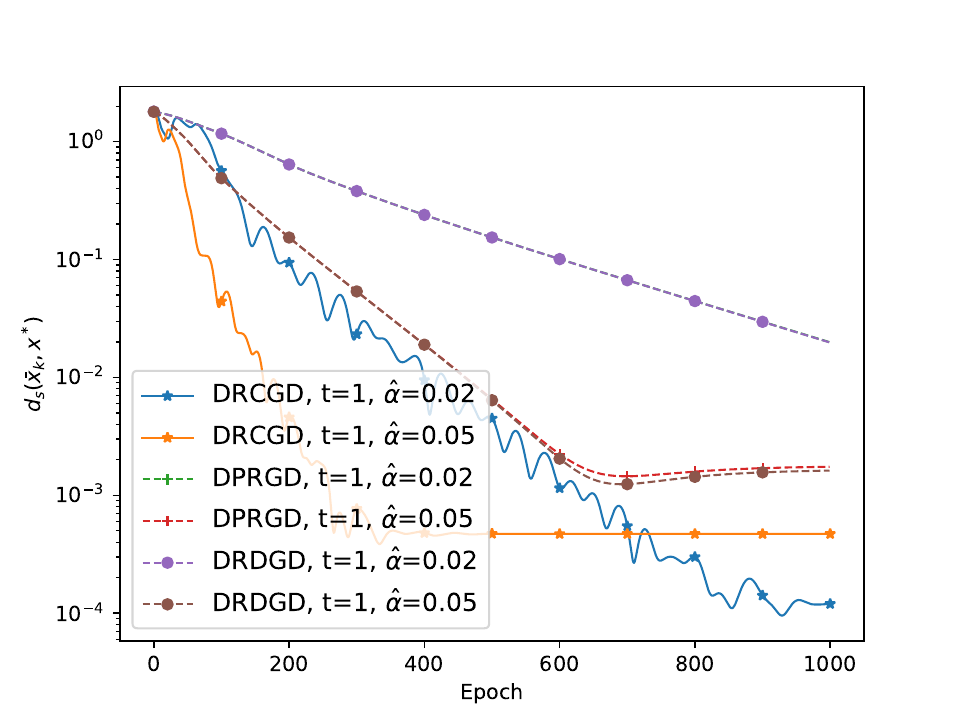}
	\end{minipage}
	\caption{Numerical results on MNIST data with single-step consensus, Graph: Ring, $n=20$.}
	\label{fig4}
\end{figure*}

\subsection{Real-world data}

We also present some numerical results on the MNIST dataset~\citep{lecun1998mnist}. For MNIST, the samples consist of 60000 hand-written images where the dimension of each image is given by $d=784$. And these samples make up the data matrix of $60000 \times 784$, which is randomly and evenly partitioned into $n$ agents. We normalize the data matrix by dividing 255. Then each agent holds a local data matrix $A_i$ of $\frac{60000}{n} \times 784$. For brevity, we fix $t=1$, $r=5$, and $d=784$, respectively. $W$ is the Metroplis constant matrix and the graph is the Ring network. The step size of our DRCGD, DRDGD, and DPRGD is $\alpha_k=\frac{\hat{\alpha}}{60000}$. We set the maximum iteration epoch to 1000 and early terminate it if $d_s(\bar{x}_k,x^*) \leq 10^{-5}$.

The results for MNIST data with $n = 20$ are shown in Figure~\ref{fig4}. We see that the performance of DRDGD and DPRGD are almost the same. When $\hat{\alpha}$ becomes larger, all algorithms converge faster. And our DRCGD converges much faster than both DRDGD and DPRGD.

\section{Conclusion}

We proposed the decentralized Riemannian conjugate gradient method for solving decentralized optimization over the Stiefel manifold. In particular, it is the first decentralized version of the Riemannian conjugate gradient. By replacing retractions and vector transports with projection operators, the global convergence was established under an extended assumption~(\ref{assumption1}) on the basis of~\citep{sato2021riemannian}, thereby reducing the computational complexity required by each agent. Numerical results demonstrated the effectiveness of our proposed algorithm. In the future, we will further extend our algorithm to a compact sub-manifold. On the other hand, it will be interesting to develop the decentralized version of online optimization over Riemannian manifolds.

\section*{Acknowledgments}

This work was supported by NSFC 62088101 Autonomous Intelligent Unmanned Systems. 

\bibliography{iclr2024_conference}
\bibliographystyle{iclr2024_conference}

\newpage
\appendix
\section{Proofs for Lemma~\ref{lem1}}
\label{app1}
\begin{proof}
Since $\operatorname{grad} f_i(x)=\mathcal{P}_{T_x \mathcal{M}}(\nabla f_i(x))$, we have
\begin{equation}
\begin{aligned}
& \left\|\operatorname{grad} f_i(x)-\operatorname{grad} f_i(y)\right\|=\left\|\mathcal{P}_{T_x \mathcal{M}}\left(\nabla f_i(x)\right)-\mathcal{P}_{T_y \mathcal{M}}\left(\nabla f_i(y)\right)\right\| \\
=& \left\|\mathcal{P}_{T_x \mathcal{M}}\left(\nabla f_i(x)\right)-\mathcal{P}_{T_x \mathcal{M}}\left(\nabla f_i(y)\right) + \mathcal{P}_{T_x \mathcal{M}}\left(\nabla f_i(y)\right)-\mathcal{P}_{T_y \mathcal{M}}\left(\nabla f_i(y)\right)\right\| \\
\leq & \left\|\mathcal{P}_{T_x \mathcal{M}}\left(\nabla f_i(x)-\nabla f_i(y)\right)\right\|+\left\|\mathcal{P}_{T_x \mathcal{M}}\left(\nabla f_i(y)\right)-\mathcal{P}_{T_y \mathcal{M}}\left(\nabla f_i(y)\right)\right\| \\
\leq & \left\|\nabla f_i(x)-\nabla f_i(y)\right\|+\left\|\mathcal{P}_{T_x \mathcal{M}}\left(\nabla f_i(y)\right)-\mathcal{P}_{T_y \mathcal{M}}\left(\nabla f_i(y)\right)\right\| \\
\leq & \left\|\nabla f_i(x)-\nabla f_i(y)\right\|+2 L_f\|x-y\| \\
\leq & (L+2 L_f)\|x-y\|,
\end{aligned}
\end{equation}
where, by Eq.(\ref{projection}), the third inequality uses
\begin{equation}
\begin{aligned}
& \left\|\mathcal{P}_{T_x \mathcal{M}}\left(\nabla f_i(y)\right)-\mathcal{P}_{T_y \mathcal{M}}\left(\nabla f_i(y)\right)\right\| \\
= & \frac{1}{2}\left\|x\left(x^{\top} \nabla f_i(y)+\nabla f_i(y)^{\top} x\right)-y\left(y^{\top} \nabla f_i(y)+\nabla f_i(y)^{\top} y\right)\right\| \\
\leq & \frac{1}{2}\left(\left\|x\left((x-y)^{\top} \nabla f_i(y)+\nabla f_i(y)^{\top}(x-y)\right)\right\|+\left\|(x-y)\left(y^{\top} \nabla f_i(y)+\nabla f_i(y)^{\top} y\right)\right\|\right) \\
\leq & \frac{1}{2} \left(2 \Vert x\Vert \cdot \Vert x-y \Vert \cdot \Vert \nabla f_i(y) \Vert + 2 \Vert x-y \Vert \cdot \Vert \nabla f_i(y) \Vert \cdot \Vert y \Vert \right) \\
\leq &2 \Vert x-y \Vert \cdot \Vert \nabla f_i(y) \Vert \leq 2 \max_{y \in \operatorname{St}(d,r)}\Vert \nabla f_i(y) \Vert \cdot \Vert x-y \Vert = 2 L_f \Vert x-y \Vert .
\end{aligned}
\end{equation}
The proof is completed.
\end{proof}

\section{Linear convergence of consensus error}

Let us first present the linear convergence of consensus error.
For the iteration scheme $x_{i,k+1}=\mathcal{P}_{\mathcal{M}}\left(\sum_{j=1}^n W_{i j}^t x_{j,k}+\alpha_k\eta_{i,k}\right)$ where $\alpha_k > 0$ and $\eta_{i,k} \in T_{x_{i,k}} \mathcal{M}$,
the following lemma yields that, for $\mathbf{x}_k$ in the neighborhood $\mathcal{N}$, the iterates $\mathbf{x}_{k+1}$ also remain in this neighborhood $\mathcal{N}$.

\begin{lemma}
Let $x_{i,k+1}=\mathcal{P}_{\mathcal{M}}\left(\sum_{j=1}^n W_{i j}^t x_{j,k}+\alpha_k\eta_{i,k}\right)$. On the basis of \textbf{Assumption}~\ref{weight}, if $\mathbf{x}_k \in \mathcal{N}:=\{\mathbf{x}: \Vert \hat{x}-\bar{x} \Vert \leq \gamma/2\}$, $\Vert \eta_{i,k}\Vert \leq C$, $0 < \alpha_k \leq \frac{\gamma(1-\gamma)}{4C}$, and $t \geq \left\lceil \log_{\sigma_2}\left(\frac{\gamma(1-\gamma)}{4\sqrt{n}\zeta}\right)\right\rceil$ with $\zeta:= \max_{x,y \in \mathcal{M}} \Vert x-y \Vert$, then

\begin{equation}
    \sum_{j=1}^n W_{i j}^t x_{j, k}+\alpha_k \eta_{i,k} \in U_{\mathcal{M}}(\gamma), \quad i=1,\cdots,n,
\end{equation}
\begin{equation}
\Vert \hat{x}_{k+1}-\bar{x}_{k+1} \Vert \leq \frac{1}{2} \gamma.
\end{equation}
\label{lem2}

\end{lemma}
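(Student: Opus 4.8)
The plan is to control the quantity $\Vert \hat{x}_{k+1} - \bar{x}_{k+1}\Vert$ by splitting it into a consensus-contraction term coming from the mixing matrix $W^t$ and a perturbation term coming from the step $\alpha_k \eta_{i,k}$, and then to invoke $R$-proximal smoothness of the Stiefel manifold (with $R=1$) via the Lipschitz estimate~(\ref{property}). First I would verify the containment claim: for each $i$, write $z_{i,k}:=\sum_{j=1}^n W_{ij}^t x_{j,k} + \alpha_k \eta_{i,k}$. Since $W$ is doubly stochastic, $\sum_j W_{ij}^t x_{j,k}$ is a convex combination of the $x_{j,k}\in\mathcal{M}$, hence lies in the convex hull of $\mathcal{M}$; I would bound its distance to $\mathcal{M}$ by first comparing it to $\hat{x}_k$ using the spectral gap — $\Vert \sum_j W_{ij}^t x_{j,k} - \hat{x}_k\Vert \le \sigma_2^t \sqrt{n}\,\zeta$ (this is the standard $\Vert (W^t - \tfrac1n \mathbf{1}\mathbf{1}^\top)\mathbf{x}_k\Vert$ estimate, using $\Vert x_{j,k}-x_{\ell,k}\Vert\le\zeta$) — and then using $\Vert \hat{x}_k - \bar{x}_k\Vert \le \gamma/2$ with $\bar{x}_k\in\mathcal{M}$. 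The choice $t \ge \lceil \log_{\sigma_2}(\gamma(1-\gamma)/(4\sqrt{n}\zeta))\rceil$ makes $\sigma_2^t\sqrt{n}\zeta \le \gamma(1-\gamma)/4 \le \gamma/4$, and $\alpha_k \Vert\eta_{i,k}\Vert \le \alpha_k C \le \gamma(1-\gamma)/4 \le \gamma/4$, so $\operatorname{dist}(z_{i,k},\mathcal{M}) \le \gamma/4+\gamma/4+\gamma/2 = \gamma < 1$, giving $z_{i,k}\in U_{\mathcal{M}}(\gamma)$ (in fact with a little room to spare; one can tighten constants if needed).

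Next, for the main inequality I would estimate $\Vert \hat{x}_{k+1}-\bar{x}_{k+1}\Vert$. The cleanest route is: $\bar{x}_{k+1} = \mathcal{P}_{\mathcal{M}}(\hat{x}_{k+1})$ by definition, and $\hat{x}_{k+1} = \tfrac1n\sum_i \mathcal{P}_{\mathcal{M}}(z_{i,k})$. I would compare each $\mathcal{P}_{\mathcal{M}}(z_{i,k})$ with $\bar{x}_k = \mathcal{P}_{\mathcal{M}}(\hat{x}_k)$ using~(\ref{property}): since both $z_{i,k}$ and $\hat{x}_k$ lie in $U_{\mathcal{M}}(\gamma)$ (the latter because $\Vert\hat{x}_k-\bar{x}_k\Vert\le\gamma/2<\gamma$), we get $\Vert \mathcal{P}_{\mathcal{M}}(z_{i,k}) - \bar{x}_k\Vert \le \tfrac{1}{1-\gamma}\Vert z_{i,k}-\hat{x}_k\Vert \le \tfrac{1}{1-\gamma}(\sigma_2^t\sqrt n\zeta + \alpha_k C) \le \tfrac{1}{1-\gamma}\cdot \tfrac{\gamma(1-\gamma)}{2} = \tfrac{\gamma}{2}$. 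Averaging over $i$ gives $\Vert \hat{x}_{k+1} - \bar{x}_k\Vert \le \gamma/2$, hence $\hat{x}_{k+1}\in U_{\mathcal{M}}(\gamma/2)$; then since $\bar{x}_{k+1}=\mathcal{P}_{\mathcal{M}}(\hat{x}_{k+1})$ is the nearest manifold point to $\hat{x}_{k+1}$ and $\bar{x}_k\in\mathcal{M}$, we conclude $\Vert \hat{x}_{k+1}-\bar{x}_{k+1}\Vert \le \Vert\hat{x}_{k+1}-\bar{x}_k\Vert \le \gamma/2$, which is exactly the claim. (If the constants above are not quite tight enough with the stated bounds, one falls back on $\operatorname{dist}(\hat{x}_{k+1},\mathcal{M}) \le \Vert\hat{x}_{k+1}-\bar{x}_k\Vert$ and argues directly; the bookkeeping is routine.)

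The step I expect to be the main obstacle is making the constants line up cleanly — in particular justifying $\Vert \sum_j W_{ij}^t x_{j,k} - \hat{x}_k\Vert \le \sigma_2^t\sqrt n\,\zeta$ rigorously (this is where the connectedness and the $\sigma_2<1$ part of Assumption~\ref{weight} enter, via $W^t\mathbf{1}=\mathbf{1}$ and the spectral decomposition of $W$ restricted to $\mathbf{1}^\perp$), and then propagating the $1/(1-\gamma)$ factor from proximal smoothness without it blowing the $\gamma/2$ budget. The delicate point is that~(\ref{property}) requires \emph{both} arguments in the $\gamma$-tube, so I must establish $z_{i,k}, \hat{x}_k \in U_{\mathcal{M}}(\gamma)$ before applying it — which is why the containment part of the lemma is proved first and reused. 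Everything else (triangle inequalities, using $\Vert\eta_{i,k}\Vert\le C$, $\alpha_k\le \gamma(1-\gamma)/(4C)$, and the logarithmic lower bound on $t$) is mechanical substitution.
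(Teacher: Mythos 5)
Your proposal is correct and follows essentially the same route as the paper: bound $\Vert\hat{x}_{k+1}-\bar{x}_{k+1}\Vert$ by $\Vert\hat{x}_{k+1}-\bar{x}_k\Vert$ via the nearest-point property of $\bar{x}_{k+1}=\mathcal{P}_{\mathcal{M}}(\hat{x}_{k+1})$, compare each $\mathcal{P}_{\mathcal{M}}(z_{i,k})$ with $\mathcal{P}_{\mathcal{M}}(\hat{x}_k)$ using the $1$-proximal-smoothness Lipschitz estimate~(\ref{property}), and control $\Vert z_{i,k}-\hat{x}_k\Vert$ by the spectral bound $\sqrt{n}\,\sigma_2^t\zeta$ plus $\alpha_k C\le\gamma(1-\gamma)/4$ each. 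Your remark that the containment in $U_{\mathcal{M}}(\gamma)$ must be established \emph{before} invoking~(\ref{property}) is a valid ordering point that the paper's write-up glosses over, but it does not change the argument.
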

\begin{proof}
Since $\mathbf{x}_k \in \mathcal{N}$, we have
\[
\begin{aligned}
& \quad\left\|\hat{x}_{k+1}-\bar{x}_{k+1}\right\| \leq\left\|\hat{x}_{k+1}-\bar{x}_k\right\| \leq \frac{1}{n} \sum_{i=1}^n\left\|x_{i, k+1}-\bar{x}_k\right\| \\
& =\frac{1}{n} \sum_{i=1}^n\left\|\mathcal{P}_{\mathcal{M}}\left(\sum_{j=1}^n W_{i j}^t x_{j, k}+\alpha_k \eta_{i, k}\right)-\mathcal{P}_{\mathcal{M}}\left(\hat{x}_k\right)\right\| \\
& \leq \frac{1}{1-\gamma} \left\|\sum_{j=1}^n W_{i j}^t x_{j, k}+\alpha_k \eta_{i, k}-\hat{x}_k\right\| \leq \frac{1}{2} \gamma,
\end{aligned}
\]
where the third inequality uses Eq.(\ref{property}) and the fourth inequality yields
\[
\begin{aligned}
& \left\|\sum_{j=1}^n W_{i j}^t x_{j, k}+\alpha_k \eta_{i,k}-\hat{x}_k\right\|  \leq \left\|\sum_{j=1}^n W_{i j}^t x_{j, k}-\hat{x}_k\right\| + \Vert \alpha_k \eta_{i,k} \Vert \\
& \leq\left\|\sum_{j=1}^n\left(W_{i j}^t-\frac{1}{n}\right)\left(x_{j, k}-\hat{x}_k\right)\right\|+\alpha_k C \\
& \leq \sum_{j=1}^n\left|W_{i j}^t-\frac{1}{n}\right|\left\|x_{j, k}-\hat{x}_k\right\|+\alpha_k C \\
& \leq \zeta \max _i \sum_{j=1}^n\left|W_{i j}^t-\frac{1}{n}\right|+\alpha_k C \leq \sqrt{n} \sigma_2^t \zeta+\alpha_k C \leq \frac{\gamma(1-\gamma)}{2},
\end{aligned}
\]
where the fourth inequality uses that $\Vert x_{j,k}-\hat{x}_k \Vert \leq \frac{1}{n} \sum_{i=1}^n \Vert x_{j,k} - x_{i,k} \Vert \leq \zeta$~\citep{deng2023decentralized} and the fifth inequality follows from the bound on the total variation distance between any row of $W^t$ and $\frac{1}{n} \textbf{1}_n$~\citep{diaconis1991geometric,boyd2004fastest}.
For any $i \in [n]$, since $\bar{x}_k \in \mathcal{M}$ and $ \gamma \in (0,1)$, we have
\[
\begin{aligned}
& \left\|\sum_{j=1}^n W_{i j}^t x_{j, k}+\alpha_k \eta_{i,k}-\bar{x}_k\right\| \leq\left\|\sum_{j=1}^n W_{i j}^t x_{j, k}+\alpha_k \eta_{i,k}-\hat{x}_k\right\|+\left\|\hat{x}_k-\bar{x}_k\right\| \\
& \leq \frac{\gamma(1-\gamma)}{2}+\frac{1}{2} \gamma < \gamma.
\end{aligned}
\]
The proof is completed.
\end{proof}

In Lemma~\ref{lem2}, the search direction $\eta_{i,k}$ is required to be bounded. Let $\eta_{i,k}=0$, then we can consider the convergence of consensus error.
\begin{theorem}
(Linear convergence of consensus error). Let $x_{i,k+1}=\mathcal{P}_{\mathcal{M}}\left(\sum_{j=1}^n W_{i j}^t x_{j,k}\right)$. On the basis of \textbf{Assumption}~\ref{weight}, if $\mathbf{x}_k \in \mathcal{N}:=\{\mathbf{x}: \Vert \hat{x}-\bar{x} \Vert \leq \gamma/2\}$ and $t \geq \max \left\{\left\lceil \log_{\sigma_2}(1-\gamma)\right\rceil, \left\lceil \log_{\sigma_2}\left(\frac{\gamma(1-\gamma)}{4\sqrt{n}\zeta}\right)\right\rceil \right\}$, then the following linear convergence with rate $\sigma_2^t/(1-\gamma)<1$ holds

\[
\Vert \mathbf{x}_{k+1} - \bar{\mathbf{x}}_{k+1} \Vert \leq \frac{\sigma_2^t}{1-\gamma} \Vert \mathbf{x}_{k} - \bar{\mathbf{x}}_{k} \Vert
\]

\label{thm1}
\end{theorem}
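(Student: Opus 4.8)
The plan is to chain three estimates: (i) the optimality defining the induced mean $\bar x_{k+1}$, which lets me replace the target $\bar{\mathbf{x}}_{k+1}$ by $\bar{\mathbf{x}}_k$; (ii) the Lipschitz property~(\ref{property}) of the nearest-point projection $\mathcal{P}_{\mathcal{M}}$ on the $1$-proximally smooth Stiefel manifold; and (iii) the spectral contraction of $W^t$ on the orthogonal complement of the consensus subspace. Throughout I write $\tilde y_{i,k}:=\sum_{j=1}^n W_{ij}^t x_{j,k}$ for the averaged iterate produced before the projection (so $x_{i,k+1}=\mathcal{P}_{\mathcal{M}}(\tilde y_{i,k})$), $\mathbf{J}:=\tfrac1n\mathbf{1}_n\mathbf{1}_n^\top\otimes \emph{I}_d$ for the averaging operator, and $\hat{\mathbf{x}}_k:=\mathbf{J}\mathbf{x}_k$ for the stacked copy of $\hat x_k$. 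Since $W$ is doubly stochastic we have $\sum_j W_{ij}^t=1$ and $\mathbf{J}\mathbf{W}^t=\mathbf{W}^t\mathbf{J}=\mathbf{J}^2=\mathbf{J}$; in particular the Euclidean average of $\tilde y_{1,k},\dots,\tilde y_{n,k}$ is exactly $\hat x_k$, and $\bar{\mathbf{x}}_k$ is fixed by $\mathbf{W}^t$ because its blocks are all equal to $\bar x_k$.

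For step (i), $\bar x_{k+1}$ minimizes $\sum_i\|y-x_{i,k+1}\|^2$ over $y\in\mathcal{M}$, and $\bar x_k\in\mathcal{M}$ is a feasible competitor, hence $\|\mathbf{x}_{k+1}-\bar{\mathbf{x}}_{k+1}\|\le\|\mathbf{x}_{k+1}-\bar{\mathbf{x}}_k\|$; Lemma~\ref{lem2} applied with vanishing search direction keeps $\mathbf{x}_{k+1}\in\mathcal{N}$, so $\bar{\mathbf{x}}_{k+1}$ is a genuine singleton. For step (ii), I compare $x_{i,k+1}=\mathcal{P}_{\mathcal{M}}(\tilde y_{i,k})$ with $\bar x_k=\mathcal{P}_{\mathcal{M}}(\hat x_k)$: Lemma~\ref{lem2} puts $\tilde y_{i,k}\in U_{\mathcal{M}}(\gamma)$, and $\operatorname{dist}(\hat x_k,\mathcal{M})=\|\hat x_k-\bar x_k\|\le\gamma/2$ puts $\hat x_k\in U_{\mathcal{M}}(\gamma)$, so~(\ref{property}) yields $\|x_{i,k+1}-\bar x_k\|\le\tfrac1{1-\gamma}\|\tilde y_{i,k}-\hat x_k\|$ for every $i$; squaring and summing over $i$, $\|\mathbf{x}_{k+1}-\bar{\mathbf{x}}_k\|\le\tfrac1{1-\gamma}\|(\mathbf{W}^t-\mathbf{J})\mathbf{x}_k\|$. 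For step (iii), write $(\mathbf{W}^t-\mathbf{J})\mathbf{x}_k=(\mathbf{W}^t-\mathbf{J})(\mathbf{I}-\mathbf{J})\mathbf{x}_k$; since $W$ is symmetric with eigenvalues in $(-1,1]$ and $1$ simple (the graph is connected), $\|W^t-\tfrac1n\mathbf{1}_n\mathbf{1}_n^\top\|\le\sigma_2^t$, and tensoring with $\emph{I}_d$ preserves this, so $\|(\mathbf{W}^t-\mathbf{J})\mathbf{x}_k\|\le\sigma_2^t\|(\mathbf{I}-\mathbf{J})\mathbf{x}_k\|=\sigma_2^t\|\mathbf{x}_k-\hat{\mathbf{x}}_k\|$. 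Finally, $\hat{\mathbf{x}}_k$ is the orthogonal projection of $\mathbf{x}_k$ onto the consensus subspace, which contains $\bar{\mathbf{x}}_k$, so $\|\mathbf{x}_k-\hat{\mathbf{x}}_k\|\le\|\mathbf{x}_k-\bar{\mathbf{x}}_k\|$. Chaining (i)--(iii) gives the claimed inequality; and $t\ge\lceil\log_{\sigma_2}(1-\gamma)\rceil$ forces $\sigma_2^t\le1-\gamma$, so the contraction factor $\sigma_2^t/(1-\gamma)$ does not exceed $1$, while the other lower bound on $t$ is precisely what Lemma~\ref{lem2} requires above.

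I expect step (ii) to be the main pitfall. The tempting route --- expand $x_{i,k+1}-\bar x_k$ through $\mathbf{W}^t(\mathbf{x}_k-\bar{\mathbf{x}}_k)$ --- fails, because the part of $\mathbf{x}_k-\bar{\mathbf{x}}_k$ lying in the consensus subspace (namely $\hat{\mathbf{x}}_k-\bar{\mathbf{x}}_k$, the residual of the Euclidean mean to the manifold) is invariant under $\mathbf{W}^t$ and is therefore not damped by $\sigma_2^t$. The remedy is to anchor the projection comparison at $\hat x_k$ instead --- the genuine Euclidean average of the pre-projection points $\tilde y_{i,k}$, whose fibre-wise residual $\mathbf{x}_k-\hat{\mathbf{x}}_k$ is orthogonal to the consensus subspace --- and only afterwards to trade $\|\mathbf{x}_k-\hat{\mathbf{x}}_k\|$ for the advertised $\|\mathbf{x}_k-\bar{\mathbf{x}}_k\|$ by the nesting of the two projection targets. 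A minor point to check is the invocation of Lemma~\ref{lem2} with a zero search direction, where its step-size bound $\alpha_k\le\gamma(1-\gamma)/(4C)$ degenerates; rereading that lemma's proof with $\alpha_k\eta_{i,k}\equiv 0$ supplies $\tilde y_{i,k}\in U_{\mathcal{M}}(\gamma)$ and $\mathbf{x}_{k+1}\in\mathcal{N}$ directly.
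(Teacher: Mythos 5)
Your proof is correct and follows essentially the same route as the paper's: the optimality of the induced arithmetic mean to pass from $\bar{\mathbf{x}}_{k+1}$ to $\bar{\mathbf{x}}_k$, the $1$-proximal-smoothness Lipschitz bound~(\ref{property}) anchored at $\hat{x}_k$ (so that $\bar{x}_k=\mathcal{P}_{\mathcal{M}}(\hat{x}_k)$), the contraction $\|(W^t-\tfrac1n\mathbf{1}_n\mathbf{1}_n^\top)\otimes I_d\|\le\sigma_2^t$ applied to $\mathbf{x}_k-\hat{\mathbf{x}}_k$, and finally $\|\mathbf{x}_k-\hat{\mathbf{x}}_k\|\le\|\mathbf{x}_k-\bar{\mathbf{x}}_k\|$. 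Your remarks on why the comparison must be anchored at $\hat{x}_k$ rather than $\bar{x}_k$, and on invoking Lemma~\ref{lem2} with $\eta_{i,k}=0$, are accurate refinements of what the paper leaves implicit.
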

\begin{proof}
According to Lemma~\ref{lem2}, $\Vert \hat{x}_0-\bar{x}_0 \Vert \leq \frac{1}{2}\gamma$ and $\mathbf{x}_k \in \mathcal{N}$ for all $k \geq 0$ holds, then it holds that
\[
\sum_{j=1}^n W_{i j}^t x_{j, k} \in U_{\mathcal{M}}(\gamma), \quad i=1,\cdots,n.
\]
Let $\mathcal{P}_{\mathcal{M}^n}(\mathbf{x})^\top=[\mathcal{P}_{\mathcal{M}}(x_1)^\top,\cdots,\mathcal{P}_{\mathcal{M}}(x_n)^\top]$, then with the iteration scheme $x_{i,k+1}=\mathcal{P}_{\mathcal{M}}\left(\sum_{j=1}^n W_{i j}^t x_{j,k}\right)$ we have
\begin{equation}
\begin{aligned}
\left\|\mathbf{x}_{k+1}-\bar{\mathbf{x}}_{k+1}\right\| & \leq\left\|\mathbf{x}_{k+1}-\bar{\mathbf{x}}_k\right\|=\left\|\mathcal{P}_{\mathcal{M}^n}\left(\mathbf{W}^t \mathbf{x}_k\right)-\mathcal{P}_{\mathcal{M}^n}\left(\hat{\mathbf{x}}_k\right)\right\| \\
& \leq \frac{1}{1-\gamma}\left\|\mathbf{W}^t \mathbf{x}_k-\hat{\mathbf{x}}_k\right\|=\frac{1}{1-\gamma}\left\|\left(W^t \otimes I_d\right) \mathbf{x}_k-\hat{\mathbf{x}}_k\right\| \\
& =\frac{1}{1-\gamma}\left\|\left(\left(W^t-
\frac{1}{n} \textbf{1}_n \textbf{1}_n^\top\right) \otimes I_d\right)\left(\mathbf{x}_k-\hat{\mathbf{x}}_k\right)\right\| \\
& \leq \frac{1}{1-\gamma} \sigma_2^t\left\|\mathbf{x}_k-\hat{\mathbf{x}}_k\right\| \leq \frac{1}{1-\gamma} \sigma_2^t\left\|\mathbf{x}_k-\bar{\mathbf{x}}_k\right\|,
\end{aligned}
\end{equation}
where the second inequality uses Eq.(\ref{property}). The proof is completed.
\end{proof}

On the Stiefel manifold, by utilizing the 1-proximally smooth property of projection operators, we establish the locally linear convergence of consensus error with a rate of $\sigma_2^t/(1-\gamma)$, where $t$ can be any positive integer, which is consistent with the cases in the Euclidean space~\citep{nedic2010constrained,nedic2018network}.

\section{Convergence of each agent}

\begin{lemma}
In \textbf{Algorithm}~\ref{alg1} with $\beta_{i,k+1}=\beta_{i,k+1}^{\mathrm{R}-\mathrm{FR}}$ in Eq.~(\ref{beta}) and Eq.~(\ref{assumption1}), assume that $\alpha_k$ satisfies the strong Wolfe conditions in Eq.(\ref{armijo}) and Eq.(\ref{wolfe}) with $0 < c_1 < c_2 < 1/2$, for each $k\geq0$. If $\operatorname{grad} f_i(x_{i,k}) \neq 0$ for each $k\geq0$, then $\eta_{i,k}$ as a descent direction satisfies

\begin{equation}
-\frac{1}{1-c_2} \leq \frac{\left\langle\operatorname{grad} f_i(x_{i,k}), \eta_{i,k}\right\rangle_{x_{i,k}}}{\left\|\operatorname{grad} f_i(x_{i,k})\right\|_{x_{i,k}}^2} \leq-\frac{1-2 c_2}{1-c_2} .
\label{drcg1}
\end{equation}
\label{lem3}
\end{lemma}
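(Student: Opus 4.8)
The plan is to prove the two-sided bound in~\eqref{drcg1} by induction on $k$, mirroring the classical Al-Baali argument for Fletcher--Reeves in Euclidean space and its Riemannian adaptation, but with the vector transport in the update of $\eta_{i,k}$ replaced by the projection $\mathcal{P}_{T_{x_{i,k+1}}\mathcal{M}}$ and the assumption~\eqref{assumption1} doing the work that the vector-transport bound does in~\citep{sato2022riemannian}. For brevity write $g_{i,k}:=\operatorname{grad} f_i(x_{i,k})$ and normalize by dividing the direction update~\eqref{direction} through by $\|g_{i,k+1}\|_{x_{i,k+1}}^2$; using $\beta_{i,k+1}^{\mathrm{R}\text{-}\mathrm{FR}}=\|g_{i,k+1}\|_{x_{i,k+1}}^2/\|g_{i,k}\|_{x_{i,k}}^2$ this yields
\begin{equation}
\frac{\langle g_{i,k+1},\eta_{i,k+1}\rangle_{x_{i,k+1}}}{\|g_{i,k+1}\|_{x_{i,k+1}}^2}
= -1 + \frac{\bigl\langle g_{i,k+1},\, \mathcal{P}_{T_{x_{i,k+1}}\mathcal{M}}\bigl(\sum_{j} W_{ij}^t\eta_{j,k}\bigr)\bigr\rangle_{x_{i,k+1}}}{\|g_{i,k}\|_{x_{i,k}}^2}.
\label{eq:ratio-recursion}
\end{equation}
Actually, to make the recursion close I would instead set $r_k := \langle g_{i,k},\eta_{i,k}\rangle_{x_{i,k}} / \|g_{i,k}\|_{x_{i,k}}^2$ and derive from~\eqref{direction} a relation of the form $r_{k+1} = -1 + (\text{transported cross term})/\|g_{i,k}\|_{x_{i,k}}^2$; I will be careful here that the term being transported is $\eta_{i,k}$, not a direction indexed at some other node, because the cross term in~\eqref{direction} at node $i$ is $\mathcal{P}_{T_{x_{i,k+1}}\mathcal{M}}(\sum_j W_{ij}^t\eta_{j,k})$ and the assumption~\eqref{assumption1} is precisely designed to dominate this quantity by $|\langle g_{i,k+1}, \mathscr{T}^R_{\alpha_k\eta_{i,k}}(\eta_{i,k})\rangle_{x_{i,k+1}}|$.

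The key steps, in order: (1) establish the base case $k=0$, where $\eta_{i,0}=-g_{i,0}$ gives $r_0=-1$, which lies in $[-\tfrac{1}{1-c_2},\,-\tfrac{1-2c_2}{1-c_2}]$ since $0<c_2<1/2$. (2) Assume~\eqref{drcg1} holds at step $k$, so in particular $\eta_{i,k}$ is a descent direction and $-\langle g_{i,k},\eta_{i,k}\rangle_{x_{i,k}} \ge \tfrac{1-2c_2}{1-c_2}\|g_{i,k}\|_{x_{i,k}}^2 > 0$. (3) Apply~\eqref{assumption1} to bound the cross term in~\eqref{eq:ratio-recursion} in absolute value by $|\langle g_{i,k+1}, \mathscr{T}^R_{\alpha_k\eta_{i,k}}(\eta_{i,k})\rangle_{x_{i,k+1}}|$, then apply the strong Wolfe condition~\eqref{wolfe} to bound this further by $c_2 |\langle g_{i,k},\eta_{i,k}\rangle_{x_{i,k}}|$. (4) Divide by $\|g_{i,k}\|_{x_{i,k}}^2$ to get $\bigl| (\text{cross term})/\|g_{i,k}\|_{x_{i,k}}^2 \bigr| \le c_2 |r_k| \le c_2/(1-c_2)$ using the inductive upper bound $|r_k| \le 1/(1-c_2)$ (which follows from the lower bound in~\eqref{drcg1} since $r_k<0$). (5) Substitute into $r_{k+1} = -1 + (\text{cross term})/\|g_{i,k}\|_{x_{i,k}}^2$ to conclude
\[
-1 - \frac{c_2}{1-c_2} \le r_{k+1} \le -1 + \frac{c_2}{1-c_2},
\]
i.e. $-\tfrac{1}{1-c_2} \le r_{k+1} \le -\tfrac{1-2c_2}{1-c_2}$, which is exactly~\eqref{drcg1} at step $k+1$, and also shows $\eta_{i,k+1}$ is a descent direction (the upper bound is negative because $c_2<1/2$), closing the induction.

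The main obstacle is step (3)--(4): making sure the chain of inequalities is applied with the correct tangent-space bookkeeping. The strong Wolfe condition~\eqref{wolfe} is stated in terms of the differentiated-retraction vector transport $\mathscr{T}^R_{\alpha_k\eta_{i,k}}(\eta_{i,k})$, while the algorithm actually uses $\mathcal{P}_{T_{x_{i,k+1}}\mathcal{M}}(\sum_j W_{ij}^t\eta_{j,k})$; the assumption~\eqref{assumption1} is the bridge, and I must verify that~\eqref{assumption1} is invoked with the inner product at the right point $x_{i,k+1}$ and that no additional constant sneaks in. A secondary subtlety is that the bound $|r_k| \le 1/(1-c_2)$ used in step (4) is the consequence of the \emph{left} inequality of the inductive hypothesis together with $r_k < 0$, so I should state explicitly that descent ($r_k<0$) is part of what is being carried through the induction. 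Everything else — the base case, the algebraic rearrangement of~\eqref{direction}, and the final substitution — is routine. Note that the Lipschitz-type Lemma~\ref{lem1} and the proximal-smoothness estimates~\eqref{property} are not needed for this particular lemma; they will enter only in the subsequent global convergence argument.
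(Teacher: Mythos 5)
Your proposal is correct and follows essentially the same route as the paper's proof: induction with base case $\eta_{i,0}=-g_{i,0}$, the recursion $r_{k+1}=-1+(\text{cross term})/\|g_{i,k}\|_{x_{i,k}}^2$ obtained by cancelling $\beta_{i,k+1}^{\mathrm{R}\text{-}\mathrm{FR}}$, the chain (\ref{assumption1})$\Rightarrow$(\ref{wolfe}) to bound the cross term by $-c_2\langle g_{i,k},\eta_{i,k}\rangle_{x_{i,k}}$, and the inductive lower bound $r_k\ge -(1-c_2)^{-1}$ to close. The only cosmetic difference is that you pass through $|r_k|\le 1/(1-c_2)$ while the paper keeps the signed quantity $c_2\,r_k$ in the two-sided bound before invoking the hypothesis; these are equivalent.
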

\begin{proof}
For ease of notation, we denote $g_{i,k}:=\operatorname{grad}f_i(x_{i,k})$. When $k=0$, $\eta_{i,0}=-g_{i,0}$ is the initial condition and we have
\[
\frac{\left\langle g_{i,0}, \eta_{i,0}\right\rangle_{x_{i,0}}}{\left\|g_{i,0}\right\|_{x_{i,0}}^2}=\frac{\left\langle g_{i,0},-g_{i,0}\right\rangle_{x_{i,0}}}{\left\|g_{i,0}\right\|_{x_{i,0}}^2}=-1.
\]
Hence, Eq.~(\ref{drcg1}) holds. Supposing that $\eta_{i,k}$ is a descent direction satisfying Eq.~(\ref{drcg1}) for some $k$, we will prove that $\eta_{i,k+1}$ is also a descent and satisfies Eq.~(\ref{drcg1}) in which $k$ is replaced with $k+1$. Based on Eq.(\ref{direction}) and Eq.(\ref{beta}), we yield
\begin{equation}
\begin{aligned}
\frac{\left\langle g_{i,k+1},\eta_{i,k+1}\right\rangle_{x_{i,k+1}}}{\left\|g_{i,k+1}\right\|_{x_{i,k+1}}^2} & =\frac{\left\langle g_{i,k+1},-g_{i,k+1}+\beta_{i,k+1} \mathcal{P}_{T_{x_{i,k+1}}\mathcal{M}}\left(\sum_{j=1}^n W_{i j}^t \eta_{j,k}\right)\right\rangle_{x_{i,k+1}}}{\left\|g_{i,k+1}\right\|_{x_{i,k+1}}^2} \\
& =-1+\frac{\left\langle g_{i,k+1}, \mathcal{P}_{T_{x_{i,k+1}}\mathcal{M}}\left(\sum_{j=1}^n W_{i j}^t \eta_{j,k}\right)\right\rangle_{x_{i,k+1}}}{\left\|g_{i,k}\right\|_{x_{i,k}}^2} .
\end{aligned}
\label{induction1}
\end{equation}

Similar to~\citep{sato2022riemannian}, the assumption in Eq.(\ref{assumption1}) and the strong Wolfe condition in Eq.(\ref{wolfe}) yield
\begin{equation}
\left|\left\langle g_{i,k+1}, \mathcal{P}_{T_{x_{i,k+1}}\mathcal{M}}\left(\sum_{j=1}^n W_{i j}^t \eta_{j,k}\right)\right\rangle_{x_{i,k+1}}\right| \leq c_2\left|\left\langle g_{i,k}, \eta_{i,k}\right\rangle_{x_{i,k}}\right|=-c_2\left\langle g_{i,k}, \eta_{i,k}\right\rangle_{x_{i,k}}.
\label{induction2}
\end{equation}

It follows from Eq.(\ref{induction1}) and Eq.(\ref{induction2}) that
\[
-1+c_2 \frac{\left\langle g_{i,k}, \eta_{i,k}\right\rangle_{x_{i,k}}}{\left\|g_{i,k}\right\|_{x_{i,k}}^2} \leq \frac{\left\langle g_{i,k+1}, \eta_{k+1}\right\rangle_{x_{i,k+1}}}{\left\|g_{i,k+1}\right\|_{x_{i,k+1}}^2} \leq-1-c_2 \frac{\left\langle g_{i,k}, \eta_{i,k}\right\rangle_{x_{i,k}}}{\left\|g_{i,k}\right\|_{x_{i,k}}^2} .
\]

From the induction hypothesis in Eq.(\ref{drcg1}), i.e., $\langle g_{i,k}, \eta_{i,k}\rangle_{x_{i,k}} / \Vert g_{i,k} \Vert^2_{x_{i,k}} \geq -(1-c_2)^{-1}$, and the assumption $c_2>0$, we finally obtain the following inequality
\[
-\frac{1}{1-c_2} \leq \frac{\left\langle g_{i,k+1}, \eta_{i,k+1}\right\rangle_{x_{i,k+1}}}{\left\|g_{i,k+1}\right\|_{x_{i,k+1}}^2} \leq-\frac{1-2 c_2}{1-c_2} ,
\]
which also implies $\langle g_{i,k+1}, \eta_{i,k+1}\rangle_{x_{i,k+1}}<0$. The proof is completed.
\end{proof}

Subsequently, we proceed to the convergence property of each agent. The proof below is based on the Riemannian version given in \citet{sato2022riemannian}.
\begin{theorem}
In \textbf{Algorithm}~\ref{alg1} with $\beta_{i,k+1}=\beta_{i,k+1}^{\mathrm{R}-\mathrm{FR}}$ in Eq.~(\ref{beta}) and Eq.~(\ref{assumption1}), assume that $\alpha_k$ satisfies the strong Wolfe conditions in Eq.(\ref{armijo}) and Eq.(\ref{wolfe}) with $0 < c_1 < c_2 < 1/2$, for each $k\geq0$. If $f_i$ is bounded below and is of class $C^1$, and the Riemannian version~\citep{ring2012optimization,sato2015new} of Zoutendijk's Theorem~\citep{nocedal1999numerical} holds, then we yield

\begin{equation}
\lim_{k \rightarrow \infty} \inf \Vert \operatorname{grad} f_i(x_{i,k}) \Vert_{x_{i,k}} = 0  , \quad i=1,\cdots,n.
\label{converge1}
\end{equation}

\label{thm2}
\end{theorem}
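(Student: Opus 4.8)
\textbf{Proof proposal for Theorem~\ref{thm2}.}
The plan is to follow the classical Zoutendijk-type argument for Fletcher--Reeves conjugate gradient, transported to the Riemannian setting, exactly as in the Euclidean proof of Al-Baali and its Riemannian counterpart in \citet{sato2022riemannian}, but now relying on the descent bound \eqref{drcg1} from Lemma~\ref{lem3} and on the surrogate assumption \eqref{assumption1} in place of a bounded vector transport. The argument is by contradiction: suppose \eqref{converge1} fails at some agent $i$, so there is a constant $\delta > 0$ with $\Vert \operatorname{grad} f_i(x_{i,k}) \Vert_{x_{i,k}} \geq \delta$ for all sufficiently large $k$. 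I will show this forces $\sum_k (\text{cosine-type quantity})^2 = \infty$ in a way that contradicts the Riemannian Zoutendijk condition.

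First I would record the two consequences of Lemma~\ref{lem3}: the direction $\eta_{i,k}$ is genuinely a descent direction, since $\langle \operatorname{grad} f_i(x_{i,k}), \eta_{i,k}\rangle_{x_{i,k}} \leq -\frac{1-2c_2}{1-c_2}\Vert \operatorname{grad} f_i(x_{i,k})\Vert_{x_{i,k}}^2 < 0$ (using $c_2 < 1/2$), and the lower bound $\langle \operatorname{grad} f_i(x_{i,k}), \eta_{i,k}\rangle_{x_{i,k}} \geq -\frac{1}{1-c_2}\Vert \operatorname{grad} f_i(x_{i,k})\Vert_{x_{i,k}}^2$. Next I would bound $\Vert \eta_{i,k}\Vert_{x_{i,k}}^2$: squaring the update \eqref{direction}, using \eqref{assumption1} together with the strong Wolfe condition \eqref{wolfe} to control the cross term $\beta_{i,k+1}\langle \operatorname{grad} f_i(x_{i,k+1}), \mathcal{P}_{T_{x_{i,k+1}}\mathcal{M}}(\sum_j W_{ij}^t \eta_{j,k})\rangle$ by $c_2 \beta_{i,k+1} |\langle \operatorname{grad} f_i(x_{i,k}),\eta_{i,k}\rangle|$, and substituting the definition \eqref{beta} of $\beta_{i,k+1}^{\mathrm{R}-\mathrm{FR}}$. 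After dividing through by $\Vert \operatorname{grad} f_i(x_{i,k+1})\Vert_{x_{i,k+1}}^2$, this yields a recursion of the form $\frac{\Vert \eta_{i,k+1}\Vert^2}{\Vert g_{i,k+1}\Vert^4} \leq \frac{\Vert \eta_{i,k}\Vert^2}{\Vert g_{i,k}\Vert^4} + \frac{1}{\Vert g_{i,k+1}\Vert^2}\cdot(\text{const})$ (the standard FR telescoping inequality), which unrolled gives $\frac{\Vert \eta_{i,k}\Vert_{x_{i,k}}^2}{\Vert \operatorname{grad} f_i(x_{i,k})\Vert_{x_{i,k}}^4} \leq C \sum_{\ell=0}^{k} \frac{1}{\Vert \operatorname{grad} f_i(x_{i,\ell})\Vert_{x_{i,\ell}}^2}$ for a suitable constant $C$ depending only on $c_2$. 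Under the contradiction hypothesis $\Vert \operatorname{grad} f_i(x_{i,\ell})\Vert \geq \delta$, the right-hand side is at most $C(k+1)/\delta^2$, hence $\frac{\Vert \operatorname{grad} f_i(x_{i,k})\Vert^4}{\Vert \eta_{i,k}\Vert^2} \geq \frac{\delta^2}{C(k+1)}$, so $\sum_k \frac{\Vert \operatorname{grad} f_i(x_{i,k})\Vert^4}{\Vert \eta_{i,k}\Vert^2}$ diverges.

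Finally, I would invoke the Riemannian Zoutendijk theorem (assumed in the hypotheses), which under $L$-Lipschitz smoothness of $f_i$ (Assumption~\ref{lipschitz}, cf. Lemma~\ref{lem1}), boundedness below of $f_i$, and the Armijo/strong-Wolfe step size gives $\sum_k \frac{\langle \operatorname{grad} f_i(x_{i,k}),\eta_{i,k}\rangle_{x_{i,k}}^2}{\Vert \eta_{i,k}\Vert_{x_{i,k}}^2} < \infty$. Combining this with the descent lower bound from Lemma~\ref{lem3}, namely $\langle \operatorname{grad} f_i(x_{i,k}),\eta_{i,k}\rangle_{x_{i,k}}^2 \geq \left(\frac{1-2c_2}{1-c_2}\right)^2 \Vert \operatorname{grad} f_i(x_{i,k})\Vert_{x_{i,k}}^4$, we get $\sum_k \frac{\Vert \operatorname{grad} f_i(x_{i,k})\Vert_{x_{i,k}}^4}{\Vert \eta_{i,k}\Vert_{x_{i,k}}^2} < \infty$, contradicting the divergence established above. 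Hence $\liminf_{k\to\infty}\Vert \operatorname{grad} f_i(x_{i,k})\Vert_{x_{i,k}} = 0$, and since the argument is uniform in $i$ it holds for every agent $i = 1,\dots,n$.

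The main obstacle I anticipate is the bound on $\Vert \eta_{i,k+1}\Vert^2$: in the classical FR analysis the cross term is handled because the transported previous direction is exactly $\mathscr{T}^R_{\alpha_k\eta_{i,k}}(\eta_{i,k})$ and the strong Wolfe condition is stated for that transport, whereas here the search direction \eqref{direction} uses the projected mixed quantity $\mathcal{P}_{T_{x_{i,k+1}}\mathcal{M}}(\sum_j W_{ij}^t \eta_{j,k})$. The assumption \eqref{assumption1} is precisely what bridges this gap — it lets the inner product against $g_{i,k+1}$ be controlled by the vector-transport version appearing in \eqref{wolfe} — but one must be careful that the \emph{norm} $\Vert \mathcal{P}_{T_{x_{i,k+1}}\mathcal{M}}(\sum_j W_{ij}^t \eta_{j,k})\Vert$ does not appear uncontrolled when squaring \eqref{direction}; the FR recursion only needs the cross term, and the ``$\Vert\cdot\Vert^2$ of the second summand'' term must be shown to telescope against $\Vert g_{i,k}\Vert^2$ using $\beta_{i,k+1}^{\mathrm{R}-\mathrm{FR}}$'s definition and the nonexpansiveness of the tangent-space projection, which is where the bulk of the careful bookkeeping lies.
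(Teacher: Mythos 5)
Your proposal follows essentially the same route as the paper's proof: the descent bounds of Lemma~\ref{lem3}, the Riemannian Zoutendijk condition $\sum_k \Vert g_{i,k}\Vert^4/\Vert\eta_{i,k}\Vert^2<\infty$, the Fletcher--Reeves telescoping recursion for $\Vert\eta_{i,k}\Vert^2$ driven by \eqref{assumption1} and the strong Wolfe condition, and the final contradiction under $\Vert g_{i,k}\Vert\geq\delta$. The only (minor) difference is that the paper first disposes of the case $\operatorname{grad} f_i(x_{i,k_0})=0$ separately (needed so that $\beta^{\mathrm{R-FR}}_{i,k}$ is well defined before unrolling the recursion back to $k=0$), a technicality your contradiction setup leaves implicit.
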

\begin{proof}
We denote $g_{i,k}:=\operatorname{grad}f_i(x_{i,k})$ again. If $g_{i,k_0}=0$ holds for some $k_0$, then we have $\beta_{i,k_0}=0$ and $\eta_{i,k_0}=0$ from Eq.(\ref{beta}) and Eq.(\ref{direction}), which implies $x_{i,k_0+1} = \mathcal{P}_{\mathcal{M}}\left(\sum_{j=1}^n W_{i j}^t x_{j,k_0}\right)$. Based on Theorem~\ref{thm1}, the consensus error converges such that $x_{i,k_0+1} \rightarrow x_{i,k_0}$ holds. Thus, we obtain $g_{i,k}=0$ for all $k\geq k_0$ so that Eq.(\ref{converge1}) holds.

We next consider the case in which $g_{i,k}\neq 0$ for all $k\geq 0$. Let $\theta_{i,k}$ be the angle between $-g_{i,k}$ and $\eta_{i,k}$, i.e.,
\begin{equation}
\cos \theta_{i,k}=\frac{\left\langle -g_{i,k}, \eta_{i,k}\right\rangle_{x_{i,k}}}{\left\|-g_{i,k}\right\|_{x_{i,k}}\left\|\eta_{i,k}\right\|_{x_{i,k}}}=-\frac{\left\langle g_{i,k}, \eta_{i,k}\right\rangle_{x_{i,k}}}{\left\|g_{i,k}\right\|_{x_{i,k}}\left\|\eta_{i,k}\right\|_{x_{i,k}}} .
\label{cos}
\end{equation}

It follows from Eq.(\ref{cos}) and Eq.(\ref{drcg1}) that
\begin{equation}
\cos \theta_{i,k} \geq \frac{1-2 c_2}{1-c_2} \frac{\left\|g_{i,k}\right\|_{x_{i,k}}}{\left\|\eta_{i,k}\right\|_{x_{i,k}}}.
\label{converge2}
\end{equation}

Since the search directions are descent directions from Lemma~\ref{lem3}, Zoutendijk's Theorem together with Eq.(\ref{converge2}) yields
\begin{equation}
\sum_{k=0}^{\infty}\frac{\left\|g_{i,k}\right\|^4_{x_{i,k}}}{\left\|\eta_{i,k}\right\|^2_{x_{i,k}}} < \infty.
\label{converge3}
\end{equation}

Combined Eq.(\ref{drcg1}) and Eq.(\ref{induction2}), we have
\begin{equation}
\left|\left\langle g_{i,k}, \mathcal{P}_{T_{x_{i,k}}\mathcal{M}}\left(\sum_{j=1}^n W_{i j}^t \eta_{j,k-1}\right)\right\rangle_{x_{i,k}}\right| \leq-c_2\left\langle g_{i,k-1}, \eta_{i,k-1}\right\rangle_{x_{i,k-1}} \leq \frac{c_2}{1-c_2}\left\|g_{i,k-1}\right\|_{x_{i,k-1}}^2 .
\label{converge4}
\end{equation}

Using Eq.(\ref{beta}), Eq.(\ref{assumption1}), and Eq.(\ref{converge4}), we obtain the recurrence inequality for $\left\|\eta_{i,k}\right\|_{x_{i,k}}^2$:
\begin{equation}
\begin{aligned}
&\left\|\eta_{i,k}\right\|_{x_{i,k}}^2 \\
& =\left\|-g_{i,k}+\beta_{i,k} \mathcal{P}_{T_{x_{i,k}}\mathcal{M}}\left(\sum_{j=1}^n W_{i j}^t \eta_{j,k-1}\right)\right\|_{x_{i,k}}^2 \\
& \leq\left\|g_{i,k}\right\|_{x_{i,k}}^2+2 \beta_{i,k}\left|\left\langle g_{i,k}, \mathcal{P}_{T_{x_{i,k}}\mathcal{M}}\left(\sum_{j=1}^n W_{i j}^t \eta_{j,k-1}\right)\right\rangle_{x_{i,k}}\right|+\beta_{i,k}^2\left\|\mathcal{P}_{T_{x_{i,k}}\mathcal{M}}\left(\sum_{j=1}^n W_{i j}^t \eta_{j,k-1}\right)\right\|_{x_{i,k}}^2 \\
& \leq\left\|g_{i,k}\right\|_{x_{i,k}}^2+\frac{2 c_2}{1-c_2} \beta_{i,k}\left\|g_{i,k-1}\right\|_{x_{i,k-1}}^2+\beta_{i,k}^2\left\|\eta_{i,k-1}\right\|_{x_{i,k-1}}^2 \\
& =\left\|g_{i,k}\right\|_{x_{i,k}}^2+\frac{2 c_2}{1-c_2}\left\|g_{i,k}\right\|_{x_{i,k}}^2+\beta_{i,k}^2\left\|\eta_{i,k-1}\right\|_{x_{i,k-1}}^2 \\
& =c\left\|g_{i,k}\right\|_{x_{i,k}}^2+\beta_{i,k}^2\left\|\eta_{i,k-1}\right\|_{x_{i,k-1}}^2,
\end{aligned}
\label{converge5}
\end{equation}
where $c:=(1+c_2)/(1-c_2)>1$. Note that we assume in the second inequality, for each $k \geq 1$, that $\left\|\sum_{j=1}^n W_{i j}^t \eta_{j,k-1}\right\|_{x_{i,k}} \leq \left\| \eta_{i,k-1}\right\|_{x_{i,k-1}}$ holds, which is similar to Formula (4.28) in \citep{sato2021riemannian}. We can successively use Eq.(\ref{converge5}) with Eq.(\ref{beta}) as
\begin{equation}
\begin{aligned}
& \left\|\eta_{i,k}\right\|_{x_{i,k}}^2 \\
\leq & c\left(\left\|g_{i,k}\right\|_{x_{i,k}}^2+\beta_{i,k}^2\left\|g_{i,k-1}\right\|_{x_{i,k-1}}^2+\cdots+\beta_{i,k}^2 \beta_{i,k-1}^2 \cdots \beta_{i,2}^2\left\|g_{i,1}\right\|_{x_{i,1}}^2\right)+\beta_{i,k}^2 \beta_{i,k-1}^2 \cdots \beta_{i,1}^2\left\|\eta_{i,0}\right\|_{x_{i,0}}^2 \\
= & c\left\|g_{i,k}\right\|_{x_{i,k}}^4\left(\left\|g_{i,k}\right\|_{x_{i,k}}^{-2}+\left\|g_{i,k-1}\right\|_{x_{i,k-1}}^{-2}+\cdots+\left\|g_{i,1}\right\|_{x_{i,1}}^{-2}\right)+\left\|g_{i,k}\right\|_{x_{i,k}}^4\left\|g_{i,0}\right\|_{x_{i,0}}^{-2} \\
< & c\left\|g_{i,k}\right\|_{x_{i,k}}^4 \sum_{j=0}^k\left\|g_{i,j}\right\|_{x_{i,j}}^{-2} .
\end{aligned}
\label{converge6}
\end{equation}

We can prove Eq.(\ref{converge1}) by contradiction. We first assume that Eq.(\ref{converge1}) does not hold. Then there exists a constant $C>0$ such that $\Vert g_{i,k} \Vert_{x_{i,k}} \geq C>0$ for all $k\geq 0$ because we also assume $g_{i,k}\neq 0$ for all $k\geq 0$ at the same time. Consequently, we have $\sum_{j=0}^k \Vert g_{i,j}\Vert^2_{x_{i,j}} \leq C^{-2} (k+1)$. Hence, based on Eq.(\ref{converge6}), the left hand side of Eq.(\ref{converge3}) is evaluated as
\[
\sum_{k=0}^{\infty}\frac{\left\|g_{i,k}\right\|^4_{x_{i,k}}}{\left\|\eta_{i,k}\right\|^2_{x_{i,k}}} \geq \sum_{k=0}^{\infty} \frac{\epsilon^2}{c}\frac{1}{k+1}=\infty ,
\]
which contradicts Eq.(\ref{converge3}). The proof is completed.
\end{proof}

\begin{figure*}[h]
	\centering
	\begin{minipage}{\linewidth}
		\centering
		\includegraphics[width=1\linewidth]{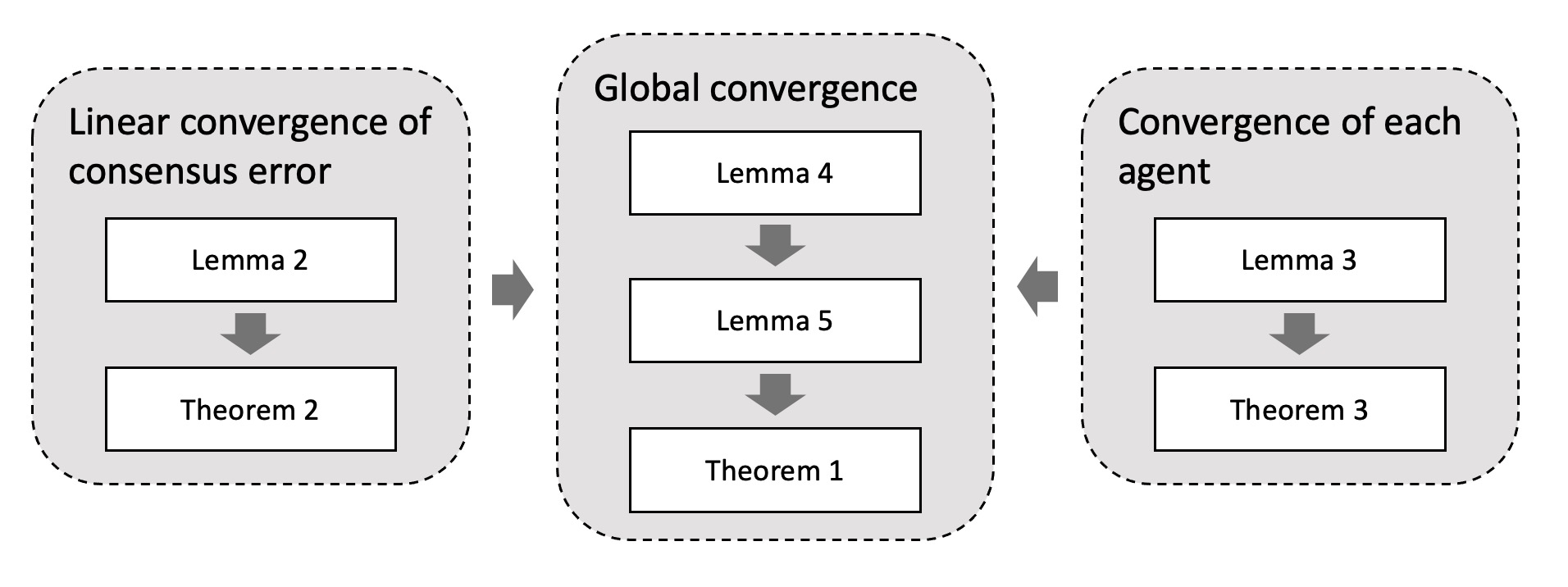}
	\end{minipage}
	\caption{An overview of the proofs.}
	\label{overview}
\end{figure*}

\section{Global Convergence}

An overview of this paper is shown in the Figure~\ref{overview}. We now investigate the uniform boundedness of $\Vert\bm{\eta}_k \Vert$ in the following lemma.
\begin{lemma}
Let $\{\mathbf{x}_k\}$ be the sequence generated by Algorithm~\ref{alg1}. Suppose that Assumptions~\ref{weight} and \ref{lipschitz} hold. If $\mathbf{x}_0 \in \mathcal{N}$, $\Vert \beta_{i,k}\Vert \leq C$, $0< \alpha_k < \min\left\{\frac{1-\gamma}{8L_g}, \frac{\gamma(1-\gamma)}{4C}\right\}$, and $t \geq \max \left\{\left\lceil\log_{\sigma_2}\left(\frac{1-\gamma}{2\sqrt{n}\zeta}\right)\right\rceil, \left\lceil\log_{\sigma_2}\left(\frac{1}{8\sqrt{n}C}\right)\right\rceil, \left\lceil \log_{\sigma_2}\left(\frac{\gamma(1-\gamma)}{4\sqrt{n}\zeta}\right)\right\rceil \right\}$, it follows that for all $k$, $\mathbf{x}_k \in \mathcal{N}$ and

\begin{equation}
\Vert \eta_{i,k}\Vert \leq 2L_g, \quad \forall i \in[n].
\end{equation}
\label{lem4}
\end{lemma}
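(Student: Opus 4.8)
The plan is to prove both assertions of the lemma — the invariance $\mathbf{x}_k\in\mathcal{N}$ and the uniform bound $\Vert\eta_{i,k}\Vert\le 2L_g$ — \emph{simultaneously} by induction on $k$, because the two are coupled: the bound on the search directions is precisely what lets us invoke Lemma~\ref{lem2} to propagate membership in $\mathcal{N}$, while membership in $\mathcal{N}$ is what keeps the $1$-proximal-smoothness estimate~(\ref{property}) and the projection $\mathcal{P}_{\mathcal{M}}$ well behaved when we estimate $\Vert\eta_{i,k+1}\Vert$. For the base case $k=0$, the hypothesis $\mathbf{x}_0\in\mathcal{N}$ handles the first claim, and since Algorithm~\ref{alg1} sets $\eta_{i,0}=-\operatorname{grad} f_i(x_{i,0})$, non-expansiveness of the orthogonal projection onto $T_{x}\mathcal{M}$ together with Assumption~\ref{lipschitz} gives $\Vert\eta_{i,0}\Vert=\Vert\operatorname{grad} f_i(x_{i,0})\Vert\le\Vert\nabla f_i(x_{i,0})\Vert\le L_f\le 2L_g$.

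For the inductive step, assume $\mathbf{x}_k\in\mathcal{N}$ and $\Vert\eta_{i,k}\Vert\le 2L_g$ for all $i$. First, applying Lemma~\ref{lem2} with the role of its bound $C$ played by $2L_g$ — the requirements there, namely $0<\alpha_k\le\tfrac{\gamma(1-\gamma)}{4\cdot 2L_g}$ and $t\ge\lceil\log_{\sigma_2}(\tfrac{\gamma(1-\gamma)}{4\sqrt{n}\zeta})\rceil$, are implied by the present hypotheses — yields $\sum_{j}W_{ij}^t x_{j,k}+\alpha_k\eta_{i,k}\in U_{\mathcal{M}}(\gamma)$ and $\Vert\hat x_{k+1}-\bar x_{k+1}\Vert\le\gamma/2$, i.e. $\mathbf{x}_{k+1}\in\mathcal{N}$. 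Next we estimate $\Vert\eta_{i,k+1}\Vert$ from the update~(\ref{direction}): the triangle inequality plus non-expansiveness of $\mathcal{P}_{T_{x_{i,k+1}}\mathcal{M}}$, the double stochasticity of $W^t$, the gradient bound $\Vert\operatorname{grad} f_i(x_{i,k+1})\Vert\le L_f$, and the assumed $\Vert\beta_{i,k+1}\Vert\le C$ reduce everything to controlling $\Vert\sum_{j}W_{ij}^t\eta_{j,k}\Vert$. Splitting this term into its mean $\tfrac1n\sum_j\eta_{j,k}$ and a residual, bounding the residual by $\sigma_2^t\sqrt{n}\max_j\Vert\eta_{j,k}\Vert$ via $\Vert W^t-\tfrac1n\mathbf{1}\mathbf{1}^\top\Vert\le\sigma_2^t$, and using $t\ge\lceil\log_{\sigma_2}(\tfrac{1}{8\sqrt{n}C})\rceil$ together with the induction hypothesis $\Vert\eta_{j,k}\Vert\le 2L_g$ makes the residual small; collecting the three contributions (the $L_f$ from $\operatorname{grad} f_i(x_{i,k+1})$, the bounded-$\beta$ factor applied to the contracted memory term) and invoking the stated step-size threshold $\alpha_k<\tfrac{1-\gamma}{8L_g}$ should close the bound at exactly $2L_g$, completing the induction. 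If the crude estimate is not tight enough, one works instead with a squared-norm version, using the substitute assumption~(\ref{assumption1}), the strong Wolfe condition~(\ref{wolfe}), and the descent-direction estimate of Lemma~\ref{lem3} to replace the cross term $\langle\operatorname{grad} f_i(x_{i,k+1}),\mathcal{P}_{T_{x_{i,k+1}}\mathcal{M}}(\sum_j W_{ij}^t\eta_{j,k})\rangle$ by a multiple of $\Vert\operatorname{grad} f_i(x_{i,k})\Vert^2$ and then exploit $\beta_{i,k+1}^{\mathrm{R}-\mathrm{FR}}\Vert\operatorname{grad} f_i(x_{i,k})\Vert^2=\Vert\operatorname{grad} f_i(x_{i,k+1})\Vert^2$ to telescope.

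The main obstacle is the conjugate-gradient memory term $\beta_{i,k+1}\,\mathcal{P}_{T_{x_{i,k+1}}\mathcal{M}}(\sum_j W_{ij}^t\eta_{j,k})$: a naive triangle-inequality bound only yields $\Vert\eta_{i,k+1}\Vert\le L_f+C\cdot 2L_g$, which overshoots $2L_g$, so the argument must genuinely use two effects and make them combine quantitatively — (i) that the tangent-space projection and the doubly stochastic $t$-step averaging are both non-expansive, and in fact the averaging contracts the \emph{disagreement} part of $\{\eta_{j,k}\}_{j}$ by a factor $\sigma_2^t$ that the large-$t$ threshold forces to be tiny (this plays the role that a \emph{scaled} vector transport plays in keeping search directions bounded in the centralized Riemannian FR-CG of \citet{sato2022riemannian}), and (ii) the control on $\beta_{i,k+1}$ — supplied here by the hypothesis $\Vert\beta_{i,k}\Vert\le C$ and, if needed, sharpened through the strong Wolfe line search. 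Getting the bookkeeping of the constants to reproduce exactly $2L_g$, and verifying that every $\alpha_k$- and $t$-threshold in the statement is exactly what these estimates demand, is the delicate part of the proof.
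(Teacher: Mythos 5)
Your overall skeleton matches the paper's: a simultaneous induction on $\mathbf{x}_k\in\mathcal{N}$ and $\Vert\eta_{i,k}\Vert\le 2L_g$, the same base case via $\Vert\operatorname{grad} f_i(x_{i,0})\Vert\le L_f\le L_g$, and the same use of Lemma~\ref{lem2} (with its bound instantiated as $2L_g$) to propagate membership in $\mathcal{N}$. But the central estimate --- the inductive bound on $\Vert\eta_{i,k+1}\Vert$ --- is not closed, and you say so yourself. Your decomposition of $\sum_j W_{ij}^t\eta_{j,k}$ into the mean $\tfrac1n\sum_j\eta_{j,k}$ plus a residual controls only the residual (by $\sigma_2^t\sqrt{n}\max_j\Vert\eta_{j,k}\Vert$, which the large-$t$ threshold makes small); the mean component is a genuine tangent-ish vector of norm up to $2L_g$, it is \emph{not} contracted by $\sigma_2^t$, and after multiplication by $\beta_{i,k+1}\le C$ it contributes up to $2CL_g$. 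Added to the $L_f$ from $\operatorname{grad} f_i(x_{i,k+1})$ this overshoots $2L_g$ for any $C\ge 1/2$, and nothing in the hypotheses makes $C$ small. The fallback you sketch (a squared-norm estimate via the Wolfe condition and~(\ref{assumption1})) controls the \emph{inner product} of the memory term with the gradient, not its norm, so it does not rescue the recursion either; that machinery is what the paper uses in Theorem~\ref{thm2}, not here.

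The paper closes the estimate with two moves you do not make. First, it bounds $\Vert\eta_{i,k+1}+\operatorname{grad} f_i(x_{i,k})\Vert$ rather than $\Vert\eta_{i,k+1}\Vert$, so that the two gradient terms pair into $\Vert\operatorname{grad} f_i(x_{i,k+1})-\operatorname{grad} f_i(x_{i,k})\Vert\le L_g\Vert x_{i,k+1}-x_{i,k}\Vert$ via Lemma~\ref{lem1}, and $\Vert x_{i,k+1}-x_{i,k}\Vert$ is then $O(\sigma_2^t\sqrt n\,\zeta+\alpha_k\Vert\eta_{i,k}\Vert)$ by the $1$-proximal-smoothness estimate~(\ref{property}); only at the very end is $\Vert\operatorname{grad} f_i(x_{i,k})\Vert\le L_g$ added back, which is where the factor $2$ in $2L_g$ comes from. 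Second --- and this is exactly the term you could not handle --- the paper bounds $\bigl\Vert\mathcal{P}_{T_{x_{i,k+1}}\mathcal{M}}\bigl(\sum_j W_{ij}^t\eta_{j,k}\bigr)\bigr\Vert$ by $\bigl\Vert\sum_j(W_{ij}^t-\tfrac1n)\eta_{j,k}\bigr\Vert\le\sigma_2^t\sqrt{n}\max_j\Vert\eta_{j,k}\Vert$, i.e., it simply discards the mean $\tfrac1n\sum_j\eta_{j,k}$ at that step (the paper offers no justification for why the projection annihilates it, so your hesitation here is well founded --- but without some such mechanism the constants do not reproduce $2L_g$, and your proposal supplies no substitute).
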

\begin{proof}
We prove it by induction on both $\Vert \eta_{i,k} \Vert$ and $\Vert \hat{x}_k - \bar{x}_k \Vert$. Based on Assumption~\ref{lipschitz}, we have $\Vert \operatorname{grad} f_i\left(x_{i, k}\right) \Vert \leq \Vert \nabla f_i\left(x_{i, k}\right) \Vert \leq L_f \leq L_g$ due to $L_g=L+2L_f \geq L_f$. Then we have $\Vert\eta_{i,0}\Vert=\Vert \operatorname{grad} f_i\left(x_{i, 0}\right) \Vert \leq L_g$ for all $i \in [n]$ and $\Vert \hat{x}_0 - \bar{x}_0 \Vert \leq \frac{1}{2}\gamma$. Suppose for some $k \geq 0$ that $\Vert \eta_{i,k}\Vert \leq 2L_g$ and $\Vert \hat{x}_k - \bar{x}_k \Vert \leq \frac{1}{2}\gamma$. Since $\Vert \eta_{i,k}\Vert \leq 2L_g$ and $\alpha_k < \frac{\gamma(1-\gamma)}{4C}$, it follows from Lemma~\ref{lem2} that
\[
\sum_{j=1}^n W_{i j}^t x_{j, k}+\alpha_k \eta_{i,k} \in U_{\mathcal{M}}(\gamma), \quad i=1,\cdots,n, \quad \Vert \hat{x}_{k+1} - \bar{x}_{k+1}\Vert \leq \frac{1}{2} \gamma.
\]
Then, we have
\begin{equation}
\begin{aligned}
&\left\|\eta_{i, k+1}+\operatorname{grad} f_i\left(x_{i, k}\right)\right\| \\
&=\left\|\beta_{i,k+1} \mathcal{P}_{T_{x_{i,k+1}}\mathcal{M}}\left(\sum_{j=1}^n W_{i j}^t \eta_{j,k}\right)-\left(\operatorname{grad} f_i\left(x_{i, k+1}\right)-\operatorname{grad} f_i\left(x_{i, k}\right)\right)\right\| \\
& \leq \beta_{i,k+1}\left\|\mathcal{P}_{T_{x_{i,k+1}}\mathcal{M}}\left(\sum_{j=1}^n W_{i j}^t \eta_{j,k}\right)\right\|+\left\|\operatorname{grad} f_i\left(x_{i, k+1}\right)-\operatorname{grad} f_i\left(x_{i, k}\right)\right\| \\
& \leq C\left\|\sum_{j=1}^n \left(W_{i j}^t-\frac{1}{n}\right) \eta_{j,k}\right\|+L_g\left\|x_{i, k+1}-x_{i, k}\right\| \\
& \leq C\sigma_2^t \sqrt{n} \max _i\left\|\eta_{i, k}\right\|+L_g\left\|x_{i, k+1}-x_{i, k}\right\| \\
& \leq C\sigma_2^t \sqrt{n} \max _i\left\|\eta_{i, k}\right\|+\frac{L_g}{1-\gamma} \left\|\sum_{j=1}^n\left(W_{i j}^t-\frac{1}{n}\right) x_{i, k}+\alpha_k \eta_{i, k}\right\| \\
& \leq\left(C\sigma_2^t \sqrt{n}+\alpha_k \frac{L_g}{1-\gamma}\right) \max _i\left\|\eta_{i, k}\right\|+\frac{L_g}{1-\gamma} \sigma_2^t \sqrt{n} \zeta \\
& \leq \frac{1}{4} \max _i\left\|\eta_{i, k}\right\|+\frac{1}{2} L_g \leq \frac{1}{2} L_g+\frac{1}{2} L_g \leq L_g ,
\end{aligned}
\end{equation}
where the second inequality uses Lemma~\ref{lem1} and the fourth inequality utilizes Eq.(\ref{property}).
Hence, $\Vert \eta_{i, k+1}\Vert \leq \left\|\eta_{i, k+1}+\operatorname{grad} f_i\left(x_{i, k}\right)\right\| + \left\|\operatorname{grad} f_i\left(x_{i, k}\right)\right\| \leq 2L_g$. The proof is completed.
\end{proof}

With the above lemma, we can elaborate the relationship between the consensus error and step size.
\begin{lemma}
Let $\{\mathbf{x}_k\}$ be the sequence generated by Algorithm~\ref{alg1}. Suppose that Assumptions~\ref{weight} and \ref{lipschitz} hold. If $\mathbf{x}_0 \in \mathcal{N}$, $\Vert \eta_{i,k}\Vert \leq 2L_g$, $t \geq \left\lceil \log_{\sigma_2}(1-\gamma)\right\rceil$, and $0 < \alpha_k \leq \frac{\gamma(1-\gamma)}{8L_g}$, it follows that for all $k$, $\mathbf{x}_k \in \mathcal{N}$ and

\begin{equation}
\frac{1}{n}\Vert \bar{\mathbf{x}}_k - \mathbf{x}_k\Vert^2 \leq C \frac{1}{(1-\gamma)^2} L_g^2 \alpha_k^2.
\end{equation}
\label{lem5}
\end{lemma}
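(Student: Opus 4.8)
The plan is to turn the consensus update into a perturbed linear recursion for the consensus error $e_k:=\Vert\mathbf{x}_k-\bar{\mathbf{x}}_k\Vert$, namely $e_{k+1}\le\rho\,e_k+c\,\alpha_k$ with $\rho<1$, and then exploit the slow decay of the step size to conclude $e_k=\mathcal{O}(\alpha_k)$, which after squaring is exactly the claimed $\mathcal{O}(\alpha_k^2)$ bound. Before that I would check that the iterates never leave $\mathcal{N}$, by induction: $\mathbf{x}_0\in\mathcal{N}$ by hypothesis, and if $\mathbf{x}_k\in\mathcal{N}$ then, since $\Vert\eta_{i,k}\Vert\le 2L_g$ and $0<\alpha_k\le\gamma(1-\gamma)/(8L_g)=\gamma(1-\gamma)/\big(4\cdot 2L_g\big)$, Lemma~\ref{lem2} (with $C=2L_g$) gives $\mathbf{x}_{k+1}\in\mathcal{N}$ and, crucially, that each pre-projection point $\sum_{j}W_{ij}^{t}x_{j,k}+\alpha_k\eta_{i,k}$ lies in the tube $U_{\mathcal{M}}(\gamma)$; this last fact is what licenses the use of the proximal-smoothness estimate (\ref{property}) below.

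For the recursion I would use the variational characterizations of $\hat x$ and $\bar x$. Since $\bar x_{k+1}=\mathcal{P}_{\operatorname{St}}(\hat x_{k+1})$ minimizes $\sum_i\Vert x_{i,k+1}-y\Vert^2$ over $y\in\mathcal{M}$ and $\bar x_k\in\mathcal{M}$, we have $e_{k+1}^2\le\sum_i\Vert x_{i,k+1}-\bar x_k\Vert^2$. Writing $x_{i,k+1}=\mathcal{P}_{\mathcal{M}}\big(\sum_j W_{ij}^{t}x_{j,k}+\alpha_k\eta_{i,k}\big)$ and $\bar x_k=\mathcal{P}_{\mathcal{M}}(\hat x_k)$, both arguments lying in $U_{\mathcal{M}}(\gamma)$ by the previous step, (\ref{property}) gives $e_{k+1}^2\le\frac{1}{(1-\gamma)^2}\sum_i\big\Vert\sum_j W_{ij}^{t}(x_{j,k}-\hat x_k)+\alpha_k\eta_{i,k}\big\Vert^2$, where I replaced $\sum_j W_{ij}^{t}x_{j,k}-\hat x_k$ by $\sum_j W_{ij}^{t}(x_{j,k}-\hat x_k)$ using that $W$ is doubly stochastic. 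In stacked notation this is $\frac{1}{(1-\gamma)^2}\Vert\mathbf{W}^t(\mathbf{x}_k-\hat{\mathbf{x}}_k)+\alpha_k\bm{\eta}_k\Vert^2$. Since $\mathbf{x}_k-\hat{\mathbf{x}}_k$ is orthogonal to the consensus subspace, Assumption~\ref{weight} yields $\Vert\mathbf{W}^t(\mathbf{x}_k-\hat{\mathbf{x}}_k)\Vert\le\sigma_2^{t}\Vert\mathbf{x}_k-\hat{\mathbf{x}}_k\Vert\le\sigma_2^{t}e_k$ (the last step because $\hat x_k$ is the Euclidean barycenter), while $\Vert\bm{\eta}_k\Vert\le 2L_g\sqrt{n}$ from $\Vert\eta_{i,k}\Vert\le 2L_g$. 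Taking square roots, and with $t\ge\lceil\log_{\sigma_2}(1-\gamma)\rceil$ so that $\rho:=\sigma_2^{t}/(1-\gamma)<1$ as in Theorem~\ref{thm1}, I obtain $e_{k+1}\le\rho\,e_k+\frac{2L_g\sqrt{n}}{1-\gamma}\,\alpha_k$.

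It remains to solve this recursion. By the assumption on $\alpha_k$ (decreasing, $\alpha_{k+1}/\alpha_k\to1$) there are $\mu\in(\rho,1)$ and $K_0$ with $\alpha_{k+1}\ge\mu\alpha_k$ for $k\ge K_0$. An induction on $k\ge K_0$ then shows $e_k\le M\alpha_k$ for $M:=\max\{e_{K_0}/\alpha_{K_0},\ 2L_g\sqrt{n}/((1-\gamma)(\mu-\rho))\}$, since $e_{k+1}\le\rho M\alpha_k+\frac{2L_g\sqrt{n}}{1-\gamma}\alpha_k\le\frac{1}{\mu}\big(\rho M+\frac{2L_g\sqrt{n}}{1-\gamma}\big)\alpha_{k+1}\le M\alpha_{k+1}$; the finitely many indices $k<K_0$ are handled by noting that $e_k$ is bounded (as $\mathbf{x}_k\in\mathcal{N}$) while $\alpha_k$ is bounded below there, which only enlarges the constant. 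Squaring and dividing by $n$ gives $\frac{1}{n}\Vert\bar{\mathbf{x}}_k-\mathbf{x}_k\Vert^2\le\frac{M^2}{n}\alpha_k^2\le C\,\frac{L_g^2}{(1-\gamma)^2}\,\alpha_k^2$, with $C$ absorbing $(\mu-\rho)^{-2}$ and the transient, which is the assertion.

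I expect the last step to be the main obstacle: a plain unrolling of $e_{k+1}\le\rho e_k+c\alpha_k$ only gives $e_k=\mathcal{O}(\alpha_0)$, so one genuinely has to use $\alpha_{k+1}/\alpha_k\to1$ and treat the transient regime $k<K_0$ separately to get a bound proportional to $\alpha_k$. A secondary, more routine point that still needs care is ensuring at every iteration that both the iterates $x_{i,k+1}$ and their pre-projection arguments remain inside the tube $U_{\mathcal{M}}(\gamma)$, which is where the uniform bound $\Vert\eta_{i,k}\Vert\le 2L_g$ and the step-size restriction enter, via Lemma~\ref{lem2}.
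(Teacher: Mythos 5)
Your proposal is correct and follows essentially the same route as the paper: use Lemma~\ref{lem2} (with $C=2L_g$) to keep the pre-projection points in $U_{\mathcal{M}}(\gamma)$, apply the proximal-smoothness estimate~(\ref{property}) and the spectral bound to get $e_{k+1}\le\rho e_k+\tfrac{2\sqrt{n}L_g}{1-\gamma}\alpha_k$ with $\rho=\sigma_2^t/(1-\gamma)<1$, and then exploit $\alpha_{k+1}/\alpha_k\to1$ with a separate treatment of the finite transient to conclude $e_k=\mathcal{O}(\alpha_k)$. Your resolution of the recursion via $\alpha_{k+1}\ge\mu\alpha_k$ with $\mu\in(\rho,1)$ is, if anything, slightly more careful than the paper's normalization $y_k=\Vert\mathbf{x}_k-\bar{\mathbf{x}}_k\Vert/(\sqrt{n}\alpha_k)$, but the two arguments are the same in substance.
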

\begin{proof}
Since $\Vert \operatorname{grad} f_i\left(x_{i, k}\right) \Vert \leq L_g$ and $\Vert \hat{x}_0 - \bar{x}_0 \Vert \leq \frac{1}{2} \gamma$, it follows from Lemma~\ref{lem2} that for any $k>0$, we have
\[
\sum_{j=1}^n W_{i j}^t x_{j, k}+\alpha_k \eta_{i,k} \in U_{\mathcal{M}}(\gamma), \quad i=1,\cdots,n,
\]
Let $\mathcal{P}_{\mathcal{M}^n}(\mathbf{x})^\top=[\mathcal{P}_{\mathcal{M}}(x_1)^\top,\cdots,\mathcal{P}_{\mathcal{M}}(x_n)^\top]$. By the definition of $\bar{\mathbf{x}}_{k+1}$ and Theorem~\ref{thm1}, then we yield
\begin{equation}
\begin{aligned}
\left\|\mathbf{x}_{k+1}-\bar{\mathbf{x}}_{k+1}\right\| & \leq\left\|\mathbf{x}_{k+1}-\bar{\mathbf{x}}_k\right\| \\
& =\left\|\mathcal{P}_{\mathcal{M}^n}\left(\mathbf{W}^t \mathbf{x}_k+\alpha_k \bm{\eta}_k\right)-\mathcal{P}_{\mathcal{M}^n}\left(\hat{\mathbf{x}}_k\right)\right\| \\
& \leq \frac{1}{1-\gamma}\left\|\mathbf{W}^t \mathbf{x}_k+\alpha_k \bm{\eta}_k-\hat{\mathbf{x}}_k\right\| \\
& \leq \frac{1}{1-\gamma} \sigma_2^t\left\|\mathbf{x}_k-\bar{\mathbf{x}}_k\right\|+\frac{2}{1-\gamma} \sqrt{n} \alpha_k L_g,
\end{aligned}
\label{error1}
\end{equation}
where the first inequality follows from the optimality of $\bar{\mathbf{x}}_{k+1}$, the second inequality uses Eq.(\ref{property}), and the third inequality utilizes the fact that $\Vert\bm{\eta}_k \Vert \leq 2 \sqrt{n} L_g$.

Let $\rho_t=\frac{1}{1-\gamma}\sigma_2^t$ where $0<\rho_t <1$, it follows from Eq.(\ref{error1}) that
\begin{equation}
\begin{aligned}
\left\|\mathbf{x}_{k+1}-\bar{\mathbf{x}}_{k+1}\right\| & \leq \rho_t \left\|\mathbf{x}_k-\bar{\mathbf{x}}_k\right\|+\frac{2}{1-\gamma} \sqrt{n} \alpha_k L_g \\
& \leq \rho_t^{k+1} \left\|\mathbf{x}_0-\bar{\mathbf{x}}_0\right\|+\frac{2 \sqrt{n} L_g}{1-\gamma}  \sum_{l=0}^k \rho_t^{k-l}\alpha_l .
\end{aligned}
\label{error1}
\end{equation}

Let $y_k=\frac{\left\|\mathbf{x}_k-\bar{\mathbf{x}}_k\right\|}{\sqrt{n} \alpha_k}$. For a positive integer $K \leq k$, it follows from Eq.(\ref{error1}) that
\[
y_{k+1} \leq \rho_t y_k + \frac{2}{1-\gamma} L_g \frac{\alpha_k}{\alpha_{k+1}} \leq \rho_t^{k+1-K} y_K + \frac{2}{1-\gamma} L_g \sum_{l=0}^k \rho_t^{k-l}\frac{\alpha_l}{\alpha_{l+1}}.
\]
Since $\alpha_k=\mathcal{O}(1/L_g)$ and $\Vert \bar{\mathbf{x}}_0 - \mathbf{x}_0 \Vert \leq \frac{1}{2}\sqrt{n}\gamma$, one has that $y_0 \leq \frac{1}{2}\gamma / \alpha_0=\mathcal{O}(L_g)$. Since $\lim_{k \rightarrow \infty} \frac{\alpha_{k+1}}{\alpha_k}=1$, there exists sufficiently large $K$ such that $\alpha_k / \alpha_{k+1} \leq 2, \forall k \geq K$. For $0\leq k \leq K$, there exists some $C'>0$ such that $y_k^2 \leq C' \frac{1}{(1-\gamma)^2} L_g^2$, where $C'$ is independent of $L_g$ and $n$. For $k \geq K$, one has that $y_k^2 \leq C \frac{1}{(1-\gamma)^2} L_g^2$, where $C=2C'+\frac{32}{(1-\rho_t)^2}$. Hence, we get $\frac{\left\|\mathbf{x}_k-\bar{\mathbf{x}}_k\right\|^2}{n} \leq C \frac{1}{(1-\gamma)^2} L_g^2$ for all $k\geq 0$, where $C=\mathcal{O}(\frac{1}{(1-\rho_t)^2})$. The proof is completed.
\end{proof}

\subsection{Proofs for Theorem~\ref{thm3}}
\label{app8}

\begin{proof}
We have the following inequality:
\begin{equation}
\begin{aligned}
&\Vert \operatorname{grad} f(\bar{x}_{k}) \Vert^2 \\
&= \left\| \frac{1}{n}\sum_{i=1}^n\operatorname{grad}f_i(x_{i,k}) + \operatorname{grad} f(\bar{x}_{k}) - \frac{1}{n}\sum_{i=1}^n\operatorname{grad}f_i(x_{i,k}) \right\|^2 \\
&\leq 2 \left\| \frac{1}{n}\sum_{i=1}^n\operatorname{grad}f_i(x_{i,k}) \right\|^2 + 2\left\| \operatorname{grad} f(\bar{x}_{k}) - \frac{1}{n}\sum_{i=1}^n\operatorname{grad}f_i(x_{i,k}) \right\|^2 \\
&\leq  2 \left\| \frac{1}{n}\sum_{i=1}^n\operatorname{grad}f_i(x_{i,k}) \right\|^2 + \frac{2}{n}\sum_{i=1}^n\left\| \operatorname{grad} f_i(\bar{x}_{k}) - \operatorname{grad}f_i(x_{i,k}) \right\|^2 \\
&\leq  2 \left\| \frac{1}{n}\sum_{i=1}^n\operatorname{grad}f_i(x_{i,k}) \right\|^2 + \frac{2 L_g^2}{n}\sum_{i=1}^n\left\| \bar{x}_{k} - x_{i,k} \right\|^2 \\
&= 2 \left\| \frac{1}{n}\sum_{i=1}^n\operatorname{grad}f_i(x_{i,k}) \right\|^2 + \frac{2 L_g^2}{n}\left\| \bar{\mathbf{x}}_k - \mathbf{x}_k \right\|^2,
\end{aligned}
\end{equation}
where the third inequality uses Lemma~\ref{lem1}.
Since $\lim_{{k\rightarrow \infty}}\inf \Vert\operatorname{grad}f_i(x_{i,k})\Vert=0$ based on Theorem~\ref{thm1} and $\Vert \bar{\mathbf{x}}_k - \mathbf{x}_k\Vert^2 \leq n C \frac{1}{(1-\gamma)^2} L_g^2 \alpha_k^2$ based on Lemma~\ref{lem5}, it follows from $\lim_{k \rightarrow \infty} \alpha_k =0$ that

\[
\lim_{k \rightarrow \infty} \Vert \operatorname{grad} f(\bar{x}_{k}) \Vert^2 \leq 2 \left\| \frac{1}{n}\sum_{i=1}^n\operatorname{grad}f_i(x_{i,k}) \right\|^2 + 2 C \frac{1}{(1-\gamma)^2} L_g^4 \alpha_k^2 =0.
\]
The proof is completed.
\end{proof}

\end{document}